\newtheorem{thm}{Theorem}[section]
\newtheorem{lem}[thm]{Lemma}
\newtheorem{prop}[thm]{Proposition}
\newtheorem{theorem}{Theorem}[section]
\newtheorem{corollary}[theorem]{Corollary}
\newtheorem{lemma}[theorem]{Lemma}
\newtheorem{proposition}[theorem]{Proposition}
\newtheorem{remark}[theorem]{Remark}
\theoremstyle{remark}
\newtheorem{rem}{Remark}[section]
\newcommand{\mR}{\mathbb{R}}
\newcommand{\mC}{\mathbb{C}}
\newcommand{\mN}{\mathbb{N}}
\newcommand{\mE}{\mathbb{E}}
\newcommand{\cM}{\mathcal{M}}
\newcommand{\cH}{\mathcal{H}}
\newcommand{\cF}{\mathcal{F}}
\newcommand{\cP}{\mathcal{P}}
\newcommand{\cC}{\mathcal{C}}
\newcommand{\cS}{\mathcal{S}}
\newcommand{\ux}{\underline{x}}
\newcommand{\uy}{\underline{y}}
\newcommand{\upx}{\partial_{\underline{x}}}
\newcommand{\upy}{\partial_{\underline{y}}}
 \def\a{{\alpha}} 
 \def\b{{\beta}}
 \def\g{{\gamma}}
 \def\l{{\lambda}}
 \def\d{{\delta}}
 \def\s{{\sigma}}
 \def\la{{\langle}}
 \def\ra{{\rangle}}
 \def\CF{{\mathcal F}}
 \def\NN{{\mathbb N}}
 \def\RR{{\mathbb R}}
        \def\tan{\operatorname{tan}}
\begin{document}
 
\title
{The fractional Clifford-Fourier transform}
\author{Hendrik De Bie}
\email{Hendrik.DeBie@UGent.be}
\author{Nele De Schepper}
\email{Nele.DeSchepper@UGent.be}
\address{Department of Mathematical Analysis\\Faculty of Engineering and Architecture\\Ghent University\\Galglaan 2, 9000 Gent\\ Belgium.}

\date{\today}
\keywords{Clifford-Fourier transform, fractional transform, integral kernel, Clifford analysis}
\subjclass{30G35, 42B10} 

\begin{abstract}
In this paper, a fractional version of the Clifford-Fourier transform is introduced, depending on two numerical parameters. A series expansion for the kernel of the resulting integral transform is derived. In the case of even dimension, also an explicit expression for the kernel in terms of Bessel functions is obtained. Finally, the analytic properties of this new integral transform are studied in detail.
\end{abstract}

\maketitle


\section{Introduction}
\setcounter{equation}{0}

The fractional Fourier transform is a generalization of the classical Fourier transform (FT). It is usually defined using the operator expression 
\[
\cF_{\alpha} =    e^{ \frac{i \alpha m}{2}}e^{ \frac{i \alpha}{2}(\Delta - |\ux|^{2} )}, \quad \a \in [-\pi,\pi]
\]
with $\Delta$ the Laplace operator in $\mR^{m}$. As integral transform, it can be written as
\begin{align*}
\cF_{\alpha} [f](\uy)&= \left(\pi (1- e^{-2i \alpha})\right)^{-m/2} \int_{\mR^m}  e^{-i \la \ux,\uy \ra / \sin \alpha} e^{\frac{i}{2}( \cot \alpha) (|\ux|^2 + |\uy|^2)}f(\ux) dx
\end{align*}
when $\a \not = \pm \pi$, $\a \not = 0$.
The precise origin of the fractional FT is not entirely clear. In the applied literature, Namias (\cite{MR573153}) is usually credited with its invention. However, Mustard (\cite{MR1668139}) attributes the fractional FT to Condon and Bargmann (see \cite{Con, Barg}).
In pure mathematics, the main ideas leading to the discovery of the fractional FT seem to have been around implicitly since the discovery of the so-called Mehler formula (\cite{M}), connecting the kernel of the fractional Fourier transform with a series expansion in terms of Hermite functions. In this context, one uses the term Hermite semigroup to denote the fractional FT, see e.g. \cite{MR0974332, MR0983366}.
Even in the completely different mathematical field of $C^{*}$-algebras, a special case of the fractional FT has been introduced independently, see \cite{MR2055227}.

For a detailed overview of the theory and applications of the fractional FT we refer the reader to \cite{OZA}. A perspective from the point of view of the Hermite semigroup can be found in e.g. \cite{Orsted2, ST}.

In the present paper, we introduce a fractional version of the Clifford-Fourier transform (CFT). The CFT (see \cite{MR2190678, MR2283868, AIEP, DBXu}) is a generalization of the Fourier transform in the framework of Clifford analysis. It is defined by the following exponential operator
\[
\cF_{\pm}:=e^{ \frac{i \pi m}{4}} e^{\frac{i \pi}{4}(\Delta - |\ux|^{2} \mp 2\Gamma)} = e^{ \frac{i \pi m}{4}} e^{\mp \frac{i \pi}{2}\Gamma  }e^{\frac{i \pi}{4}(\Delta - |\ux|^{2})}
\]
with $\Gamma_{\ux} := - \sum_{j<k} e_{j}e_{k} (x_{j} \partial_{x_{k}} - x_{k}\partial_{x_{j}})$. The integral kernel of this transform is not so easy to obtain (see \cite{DBXu}) and the result is a complicated formula (see the subsequent Theorem \ref{EvenExplicitCFT}).

Here, we generalize the CFT in the following way
\[
\cF_{\alpha, \b} =   e^{ \frac{i \alpha m}{2}} e^{i\b \Gamma}e^{ \frac{i \alpha}{2}(\Delta - |\ux|^{2})},
\]
where we have introduced two numerical parameters $\a$ and $\b$. These can of course be chosen equal to each other (as was done in \cite{DBDSC}, where only the two dimensional case was treated). However, choosing them independently gives more insight in the different roles they play.

This new fractional transform is interesting for several reasons. First, it leads to a 2-parameter multi-variate Mehler type formula. Second, it elucidates the role played by the two defining factors in the operator exponential definition of the CFT. Third, it paves the way for a functional calculus approach to generalized Fourier transforms in Clifford analysis, which will be reported in \cite{DBC}.

The paper is organized as follows. In section  \ref{prelim} we repeat some basic knowledge on Clifford analysis and give the explicit kernel of the CFT as determined in \cite{DBXu}. In section \ref{FracVersion} we define a fractional version of the CFT and obtain a series expansion of its kernel. Subsequently, in section \ref{ExplicitCF} we compute an explicit expression for the kernel in even dimension. The case of dimension two is treated separately. Next in section \ref{secFurtherprops} we show that the kernel satisfies a system of PDEs. In section \ref{PropertiesSection} we prove that the fractional CFT is a continuous operator on the Schwartz space when the dimension is even. Finally, in section \ref{EigenvaluesSection} we obtain the eigenvalues of the fractional CFT, which are used to prove an inversion theorem.

\section{Preliminaries}\label{prelim}
\setcounter{equation}{0}

The Clifford algebra $\cC l_{0,m}$ over $\mR^{m}$ is the algebra generated by $e_{i}$, $i= 1, \ldots, m$, under the relations
\begin{align} \label{eq:eij}
\begin{split}
&e_{i} e_{j} + e_{j} e_{i} = 0, \qquad i \neq j,\\
& e_{i}^{2} = -1.
\end{split}
\end{align}
This algebra has dimension $2^{m}$ as a vector space over $\mR$. It can be decomposed as $\cC l_{0,m} = \oplus_{k=0}^{m} \cC l_{0,m}^{k}$
with $\cC l_{0,m}^{k}$ the space of $k$-vectors defined by
\[
\cC l_{0,m}^{k} := \mbox{span} \{ e_{i_{1}} \ldots e_{i_{k}}, i_{1} < \ldots < i_{k} \}.
\]
In the sequel, we will always consider functions $f$ taking values in $\cC l_{0,m}$, unless explicitly mentioned. Such functions can be decomposed as
\begin{equation} \label{clifford_func}
f = f_{0} + \sum_{i=1}^{m} e_{i}f_{i}+ \sum_{i< j} e_{i} e_{j} f_{ij} + \ldots + e_{1} \ldots e_{m} f_{1 \ldots m} 
\end{equation}
with $f_{0}, f_{i}, f_{ij}, \ldots, f_{1 \ldots m}$ all real-valued functions on $\mR^{m}$.

The Dirac operator is given by $\upx = \sum_{j=1}^{m} \partial_{x_{j}} e_{j}$ and the vector variable by $\ux = \sum_{j=1}^{m} x_{j} e_{j}$.
The square of the Dirac operator equals, up to a minus sign, the Laplace operator in $\mR^{m}$: $\upx^{2} = - \Delta$.

We further introduce the so-called Gamma operator (see e.g. \cite{MR1169463})
\[
\Gamma_{\ux} := - \sum_{j<k} e_{j}e_{k} (x_{j} \partial_{x_{k}} - x_{k}\partial_{x_{j}}). 
\]
Note that $\Gamma_{\ux}$ commutes with radial functions, i.e. $[\Gamma_{\ux}, f(|\ux|)] =0$.

Denote by $\cP$ the space of polynomials taking values in $\cC l_{0,m}$, i.e. 
$$
   \cP : = \mR[x_{1}, \ldots, x_{m}] \otimes \cC l_{0,m}.
$$ 
The space of homogeneous polynomials of degree $k$ is then denoted by $\cP_{k}$. 
The space $\cM_{k} : = \ker{\upx} \cap \cP_{k}$ is called the space of spherical monogenics 
of degree $k$. Similarly,  $\cH_{k} := \ker{\Delta} \cap \cP_{k}$ is the space of spherical 
harmonics of degree $k$.

Next we define the inner product and the wedge product of two vectors $\ux$ and $\uy$
\begin{align*}
\langle \ux, \uy \rangle &:= \sum_{j=1}^{m} x_{j} y_{j} = - \frac{1}{2} (\ux \uy + \uy  \ux)\\
\ux \wedge \uy &:= \sum_{j<k} e_{j}e_{k} (x_{j} y_{k} - x_{k}y_{j}) = \frac{1}{2} (\ux \uy - \uy  \ux).
\end{align*}
For the sequel we need the square of $\ux \wedge \uy$. A short computation (see \cite{DBXu}) shows
\begin{align*}
(\ux \wedge \uy )^{2} &=  - |\ux|^{2}|\uy|^{2} +\langle \ux,\uy\rangle^{2} = - \sum_{j<k}  (x_{j} y_{k} - x_{k}y_{j})^{2},
\end{align*}
from which we observe that $(\ux \wedge \uy )^{2}$ is real-valued. 

We also introduce a basis $\{ \psi_{j,k,\ell}\}$ for the space $\cS(\mR^{m}) \otimes \cC l_{0,m}$, where 
$\cS(\mR^m)$ denotes the Schwartz space.  Define the functions  $\psi_{j,k, \ell}(x)$ by
\begin{align} \label{basis}
\begin{split}
\psi_{2j,k, \ell}(\ux) &:= L_{j}^{\frac{m}{2}+k-1}(|\ux|^{2}) M_{k}^{(\ell)} e^{-|\ux|^{2}/2},\\
\psi_{2j+1,k, \ell}(\ux) &:= L_{j}^{\frac{m}{2}+k}(|\ux|^{2}) \ux M_{k}^{(\ell)} e^{-|\ux|^{2}/2},
\end{split}
\end{align}
where $j,k \in \mN$, $\{M_{k}^{(\ell)} \in \cM_{k}: \ell = 1, \ldots, \dim \cM_{k}\}$ is a basis for $\cM_{k}$,
and $L_{j}^{\alpha}$ are the Laguerre polynomials. The set $\{ \psi_{j,k, \ell}\}$ forms a basis of 
$\cS(\mR^{m}) \otimes \cC l_{0,m}$, see \cite{MR926831}.

The operator exponential definition of the CFT $\cF_{\pm}$ is given by (see \cite{MR2190678}, we follow the normalization given in \cite{AIEP})
\begin{equation}
\cF_{\pm}:=e^{ \frac{i \pi m}{4}} e^{\frac{i \pi}{4}(\Delta - |\ux|^{2} \mp 2\Gamma)} = e^{ \frac{i \pi m}{4}} e^{\mp \frac{i \pi}{2}\Gamma  }e^{\frac{i \pi}{4}(\Delta - |\ux|^{2})}.
\label{CFTF}
\end{equation}
The second equality follows because $\Gamma$ commutes with $\Delta$ and $|\ux|^{2}$.

In the paper \cite{DBXu}, the question whether $\cF_{\pm}$ can be written as an integral transform
\begin{align*}
\cF_{\pm}[f](\uy) & : = (2 \pi)^{-\frac{m}{2}} \int_{\mR^{m}} K_\pm(\ux,\uy) f(\ux) \, dx,
\end{align*}
was answered positively in the case of even dimension $m$. The result is given in the following theorem.

\begin{thm}  \label{EvenExplicitCFT}
The kernel of the Clifford-Fourier transform in even dimension $m>2$ is given by
\begin{align*}
K_-(\ux,\uy) &= e^{i \frac{\pi}{2}\Gamma_{\uy}} e^{-i \langle \ux,\uy \rangle}\\
& = (-1)^{\frac{m}{2}} \left(\frac{\pi}{2} \right)^{\frac{1}{2}} \left(A_{(m-2)/2}^{*}(s,t) +B_{(m-2)/2}^{*}(s,t)+ (\ux \wedge \uy) \ C_{(m-2)/2}^{*}(s,t)\right)
\end{align*}
where $s=\langle \ux,\uy \rangle$ and $t= |\ux \wedge \uy| =   \sqrt{|\ux|^{2} |\uy|^{2}-s^2}$ and
\begin{align*}
A_{(m-2)/2}^{*}(s,t) &=\sum_{\ell=0}^{\left\lfloor \frac{m}{4}-\frac{3}{4}\right\rfloor}  s^{m/2-2-2\ell} \ \frac{1}{2^{\ell} \ell!} \ \frac{\Gamma \left( \frac{m}{2} \right) }{ \Gamma \left( \frac{m}{2}-2\ell -1 \right)} \widetilde{J}_{(m-2\ell-3)/2}(t),\\
B_{(m-2)/2}^{*}(s,t)&= - \sum_{\ell=0}^{\left\lfloor \frac{m}{4}-\frac{1}{2}\right\rfloor} s^{m/2-1-2\ell} \ \frac{1}{2^{\ell} \ell!} \ \frac{\Gamma \left( \frac{m}{2} \right)}{\Gamma \left( \frac{m}{2}-2\ell \right)} \widetilde{J}_{(m-2\ell-3)/2}(t),\\
C_{(m-2)/2}^{*}(s,t) &= - \sum_{\ell=0}^{\left\lfloor \frac{m}{4}-\frac{1}{2}\right\rfloor} s^{m/2-1-2\ell} \ \frac{1}{2^{\ell} \ell!} \ \frac{\Gamma \left( \frac{m}{2} \right)}{\Gamma \left( \frac{m}{2}-2\ell \right)}  \widetilde{J}_{(m-2\ell-1)/2}(t)
\end{align*}
with $\widetilde{J}_{\alpha}(t) = t^{-\alpha} J_{\alpha}(t)$.
\end{thm}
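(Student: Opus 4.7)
My plan has two parts: first verify the unintegrated form $K_- = e^{i\pi/2\, \Gamma_\uy} e^{-i\la\ux,\uy\ra}$, and then extract the explicit Bessel function expression for it.

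For the first part, I would exploit the factorization $\cF_- = e^{i\pi m/4}\, e^{-i\pi/2\, \Gamma_\ux}\, e^{i\pi/4(\Delta - |\ux|^{2})}$ provided by \eqref{CFTF}. The inner factor $e^{i\pi m/4}\, e^{i\pi/4(\Delta - |\ux|^{2})}$ is, via Mehler's classical formula, the standard Euclidean Fourier transform on $\mR^m$, whose integral kernel is $e^{-i\la\ux,\uy\ra}$. It then remains to show that the outer operator $e^{-i\pi/2\, \Gamma_\ux}$, moved inside the integral, can be replaced by $e^{i\pi/2\, \Gamma_\uy}$; this follows because $\Gamma_\ux\, e^{-i\la\ux,\uy\ra} = -\Gamma_\uy\, e^{-i\la\ux,\uy\ra}$ (both sides equal $-i(\ux\wedge\uy) e^{-i\la\ux,\uy\ra}$ by a direct computation).

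For the second part I would apply the Funk--Hecke/Gegenbauer expansion
\[
e^{-i\la\ux,\uy\ra} = 2^{(m-2)/2}\, \Gamma\!\left(\tfrac{m-2}{2}\right) \sum_{k=0}^{\infty} (-i)^k \left(k + \tfrac{m-2}{2}\right) \widetilde{J}_{k+(m-2)/2}(|\ux||\uy|)\, C_k^{(m-2)/2}(\la\xi,\eta\ra),
\]
with $\xi = \ux/|\ux|$ and $\eta = \uy/|\uy|$. The Fischer decomposition splits each zonal polynomial $C_k^{(m-2)/2}(\la\xi,\eta\ra)$ into pieces coming from spherical monogenics of degree $k$ and $k-1$, on which $\Gamma_\uy$ has the known eigenvalues $-k$ and $k+m-1$ respectively. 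Applying $e^{i\pi/2\, \Gamma_\uy}$ then multiplies each piece by a unimodular phase, and regrouping by the parity of $k$ isolates the scalar ($A^*$), $s$-linear scalar ($B^*$), and bivector ($C^*$) components in the statement.

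The main obstacle is to collapse the resulting infinite series into the \emph{finite} sums $A^*_{(m-2)/2}$, $B^*_{(m-2)/2}$, $C^*_{(m-2)/2}$. This is a genuinely parity-dependent phenomenon: in even dimension, repeated use of the Bessel three-term recurrence together with the explicit polynomial form of $C_k^{(m-2)/2}$ produces large cancellations that shorten the sum to length $\lfloor m/4 - 3/4 \rfloor$ or $\lfloor m/4 - 1/2 \rfloor$, corresponding exactly to the vanishing of the reciprocal Gamma factors $1/\Gamma(m/2 - 2\ell - 1)$ and $1/\Gamma(m/2 - 2\ell)$. Tracking the combinatorics through the Clifford-algebraic manipulations, matching the correct powers of $s=\la\ux,\uy\ra$ with the right indices of $\widetilde{J}$, is the bookkeeping-heavy core of the argument and the step I expect to consume most of the effort.
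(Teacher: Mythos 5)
Your first step is fine in outcome but slightly garbled in the details: by \eqref{CFTF} the operator $\cF_-$ carries the factor $e^{+\frac{i\pi}{2}\Gamma}$ (the $\mp$ becomes $+$ for the lower sign), and since this factor is applied \emph{after} the Gaussian-regularized Fourier transform it already acts in the output variable $\uy$; the kernel $K_-(\ux,\uy)=e^{i\frac{\pi}{2}\Gamma_{\uy}}e^{-i\la\ux,\uy\ra}$ drops out with no need for your detour through $\Gamma_{\ux}$ (your identity $\Gamma_{\ux}e^{-i\la\ux,\uy\ra}=-\Gamma_{\uy}e^{-i\la\ux,\uy\ra}$ is correct, though the common value is $+i(\ux\wedge\uy)e^{-i\la\ux,\uy\ra}$, not $-i$). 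Likewise, the eigenvalue of $\Gamma_{\uy}$ on the component $\uy\,\cM_{k-1}$ is $(k-1)+m-1=k+m-2$, not $k+m-1$; this is exactly why the phases $e^{i\b(k+m-2)}$ and $e^{-i\b k}$ appear in Lemma \ref{actionGamma}. Up to these slips, your plane-wave expansion plus $\Gamma$-eigenvalue decomposition is the same opening as the proof in \cite{DBXu}, which is where this theorem is actually established (the present paper only quotes it and records the mechanism in \eqref{relsOrdCFT}).

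The genuine gap is in the step you yourself flag as the core: collapsing the infinite Gegenbauer--Bessel series to the stated finite sums. This does not happen through ``Bessel three-term recurrences producing cancellations'' in the series for general $\lambda$. The actual mechanism, visible in \eqref{relsOrdCFT} and carried out in Sections 3--4 of this paper for the fractional case, has three ingredients you are missing. First, at $\b=\pi/2$ and $m$ even the phase factors $e^{i\b(k+2\lambda)}\pm e^{-i\b k}$ reduce to $(-1)^{\lambda}\pm(-1)^{k}$, killing half the terms but still leaving infinite series. Second, the base case $\lambda=0$ (i.e.\ $m=2$) is summed in \emph{closed form} via the Jacobi--Anger expansion $e^{-i\widetilde{z}\cos\theta}=J_0(\widetilde{z})+2\sum_{k\ge1}i^{-k}J_k(\widetilde{z})\cos k\theta$, yielding $B_0^0=\cos(z\sqrt{1-w^2})$ and its $w^{-1}\partial_w$ companion. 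Third, the recursions $X_{\lambda}\propto \widetilde{z}^{-1}\partial_w X_{\lambda-1}$ (Lemma \ref{RecursionsProps}) lift the closed form to arbitrary even $m$, and the finite Bessel sums of the theorem are then read off by explicitly computing $\partial_w^{k}$ of $\cos(z\sqrt{1-w^2})$, the truncation at $\ell\le\lfloor m/4-3/4\rfloor$ resp.\ $\lfloor m/4-1/2\rfloor$ coming from the finite order of differentiation (equivalently the poles of $1/\Gamma(\frac m2-2\ell-1)$ and $1/\Gamma(\frac m2-2\ell)$), not from cancellations inside an infinite sum. Without the closed-form base case and the $\partial_w$-recursion in $\lambda$, your plan has no route to the finite expressions, so as written the proof does not close.
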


This theorem was obtained by the explicit computation of
\begin{align}
\label{relsOrdCFT}
\begin{split}
A^{*}_{k} &=  \left(\frac{\pi}{2} \right)^{-\frac{1}{2}} k z^{-k-1} \partial_{w}^{k-1} \left( w^{-1} \partial_{w} B_{0}^{0}\right)\\
B^{*}_{k} &= - \left(\frac{\pi}{2} \right)^{-\frac{1}{2}}  z^{-k} \partial_{w}^{k}  B_{0}^{0}\\
C^{*}_{k} &=  -\left(\frac{\pi}{2} \right)^{-\frac{1}{2}}  z^{-k-2} \partial_{w}^{k} \left( w^{-1} \partial_{w} B_{0}^{0}\right)
\end{split}
\end{align}
where $z=|\ux| |\uy|$, $w=\langle \underline{\xi}, \underline{\eta} \rangle$, $\ux = |\ux | \ \underline{\xi}$ , $\uy = |\uy| \ \underline{\eta}$ and $B_{0}^{0} =\cos{(z \sqrt{1-w^{2}})}$.


\section{Fractional version of the Clifford-Fourier transform}
\label{FracVersion}
\setcounter{equation}{0}

In this section, we show how a fractional version of the CFT can be introduced. To that end, we adapt the initial definition of the CFT
\[
\cF_{\pm} =  e^{ \frac{i \pi m}{4}} e^{\mp \frac{i \pi}{2} \Gamma}e^{\frac{i \pi}{4}(\Delta - |\ux|^{2})}
\]
to
\[
\cF_{\alpha, \b} =   e^{ \frac{i \alpha m}{2}} e^{i\b \Gamma}e^{ \frac{i \alpha}{2}(\Delta - |\ux|^{2})}
\]
where $\alpha, \b \in [-\pi,\pi]$. Notice that we immediately have
\[
\cF_{\alpha, \b} \circ \cF_{\alpha, - \b} = \cF_{\alpha, -\b} \circ \cF_{\alpha,  \b} =  \cF_{\alpha}^{2}
\]
with $\cF_{\alpha} =    e^{ \frac{i \alpha m}{2}}e^{ \frac{i \alpha}{2}(\Delta - |\ux|^{2} )}
 $ the fractional version of the ordinary Fourier transform, given explicitly by (see \cite{OZA})
\begin{align*}
\cF_{\alpha} [f](\uy)&= \left(\pi (1- e^{-2i \alpha})\right)^{-m/2} \int_{\mR^m}  e^{-i \la \ux,\uy \ra / \sin \alpha} e^{\frac{i}{2}( \cot \alpha) (|\ux|^2 + |\uy|^2)}f(\ux) dx.
\end{align*}

Our aim is to find an integral expression for $\cF_{ \alpha, \b}$:
\[
\cF_{ \alpha,\b}[f](\uy) =  \left(\pi (1- e^{-2i \alpha})\right)^{-m/2}\int_{\mathbb{R}^m} K_{\alpha,\b}(\ux,\uy) \ f(\ux) \ dx.
\]
In our computation, we need to put a few restrictions on the parameters $\alpha$ and $\beta$. We exclude for now the case where $\alpha =0$ or $\alpha =\pm \pi$. In section \ref{excep} we will explain what happens for these exceptional values.

We compute formally
\begin{align*}
\cF_{ \alpha,\b} &= e^{ \frac{i \alpha m}{2}} e^{ i \b \Gamma} e^{ \frac{i \alpha}{2}(\Delta - |\ux|^{2})}\\
&= \left(\pi (1- e^{-2i \alpha})\right)^{-m/2} e^{ i \b \Gamma_{\uy}} \int_{\mR^m}    e^{-i \la \ux,\uy \ra / \sin \alpha} e^{\frac{i}{2}( \cot \alpha) (|\ux|^2 + |\uy|^2)} (.) dx.
\end{align*}
Hence, the kernel is given by
\begin{align*}
K_{\alpha,\b}(\ux,\uy) &= e^{ i \b \Gamma_{\uy}} \left(  e^{-i \la \ux,\uy \ra / \sin \alpha} e^{\frac{i}{2}( \cot \alpha) (|\ux|^2 + |\uy|^2)} \right)\\
&= e^{\frac{i}{2}( \cot \alpha) (|\ux|^2 + |\uy|^2)}  e^{ i \b \Gamma_{\uy}} \left(  e^{-i \la \ux,\uy \ra / \sin \alpha}\right)
\end{align*}
where the last line follows because $\Gamma_{\uy}$ commutes with $|\uy|$. 

Recall now the series expansion for the ordinary Fourier kernel (\cite{MR0010746}, Section 11.5)
\begin{equation}
\label{planewave}
e^{-i \langle\ux, \uy\rangle} = 2^{\lambda} \Gamma(\lambda)\sum_{k=0}^{\infty}(k+ \lambda) (-i)^{k} (|\ux||\uy|)^{-\lambda} J_{k+ \lambda}(|\ux||\uy|) \; C_{k}^{\lambda}(\langle \underline{\xi},\underline{\eta} \rangle),
\end{equation}
where $\underline{\xi}= \ux/|\ux|$, $\underline{\eta} = \uy/|\uy|$ and $\lambda =(m-2)/2$. Here, $J_{\nu}$ is the Bessel function and $C_{k}^{\lambda}$ the Gegenbauer polynomial.

Using (\ref{planewave}), the term $e^{ i \b \Gamma_{\uy}} \left(  e^{-i \la \ux,\uy \ra / \sin \alpha}\right)
$ can be computed. Indeed, we find
\begin{align*}
&e^{ i \b \Gamma_{\uy}} \left(  e^{-i \la \ux,\uy \ra / \sin \alpha}\right)\\& = e^{ i \b \Gamma_{\uy}} 
 \Gamma(\lambda)\sum_{k=0}^{\infty}2^{\lambda}(k+ \lambda) (i \sin \alpha)^{-k} (|\ux||\uy|)^{k} \widetilde{J}_{k+ \lambda}\left( \frac{|\ux||\uy|}{\sin \alpha} \right) \; C_{k}^{\lambda}(\langle \underline{\xi},\underline{\eta} \rangle)\\
 &=\Gamma(\lambda)\sum_{k=0}^{\infty}2^{\lambda}(k+ \lambda) (i \sin \alpha)^{-k} \widetilde{J}_{k+ \lambda}\left( \frac{|\ux||\uy|}{\sin \alpha} \right) \;  e^{ i \b \Gamma_{\uy}}\left(  (|\ux||\uy|)^{k} C_{k}^{\lambda}(\langle \underline{\xi},\underline{\eta} \rangle)\right)
\end{align*}
so we have reduced the problem to calculating $e^{ i \b \Gamma_{\uy}}\left(  (|\ux||\uy|)^{k} C_{k}^{\lambda}(\langle \underline{\xi},\underline{\eta} \rangle)\right)$. This can be done in a manner analogous to Lemma 3.1 in \cite{DBXu}. The result is given in the following lemma.

\begin{lemma}\label{actionGamma}
One has
\begin{align*}
e^{i \b\Gamma_{\uy}}(|\ux||\uy|)^{k} C_{k}^{\lambda}(\langle \underline{\xi},\underline{\eta}  \rangle) = \, & \frac{1}{2} \left(e^{i\b(k+m-2)} + e^{-i \b k} \right)(|\ux||\uy|)^{k} C_{k}^{\lambda}(\langle\underline{\xi},\underline{\eta}  \rangle)\\
&  - \frac{\lambda }{2(k+\lambda)} \left(e^{i\b(k+m-2)}- e^{-i\b k}\right) (|\ux||\uy|)^{k} C_{k}^{\lambda}(\langle \underline{\xi},\underline{\eta}  \rangle)\\
& +  \frac{ \lambda  }{k + \lambda} \ux \wedge \uy (  e^{i \b(k+m-2)} -e^{-i \b k}) (|\ux||\uy|)^{k-1} C_{k-1}^{\lambda+1}(\langle\underline{\xi},\underline{\eta} \rangle).
\end{align*}
\end{lemma}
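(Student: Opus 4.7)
The plan is to reduce the computation to an eigenvalue problem for $\Gamma_{\uy}$ via the Fischer decomposition of
\[
H_k(\ux,\uy) := (|\ux||\uy|)^k\, C_k^\lambda(\langle\underline{\xi},\underline{\eta}\rangle),
\]
viewed as a polynomial of degree $k$ in $\uy$ (with $\ux$ a parameter). Since $C_k^\lambda(\langle\underline{\xi},\underline{\eta}\rangle)$ restricts to a spherical harmonic of degree $k$ in $\underline{\eta}$, so does $H_k$ in $\uy$. Hence $H_k$ admits a unique Fischer decomposition
\[
H_k = M_k + \uy\,\widetilde{M}_{k-1}, \qquad M_k \in \cM_k,\ \widetilde{M}_{k-1} \in \cM_{k-1},
\]
where monogenicity is understood in the $\uy$-variable (with $\ux$-dependent coefficients).

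The first ingredient is the action of $\Gamma_{\uy}$ on each summand. The identity $\upy\,\uy = -m - \mE_{\uy} + \Gamma_{\uy}$, easily verified by direct computation together with the standard $\upy \uy + \uy \upy = -2\mE_{\uy} - m$, combined with $\upy M_j = 0$ and $\mE_{\uy} M_j = j\, M_j$ for $M_j \in \cM_j$, yields
\[
\Gamma_{\uy}\, M_j = -j\, M_j, \qquad \Gamma_{\uy}(\uy\, M_j) = (j + m - 1)\,\uy\, M_j.
\]
Consequently,
\[
e^{i\beta \Gamma_{\uy}}\, H_k = e^{-i\beta k}\, M_k + e^{i\beta(k+m-2)}\,\uy\,\widetilde{M}_{k-1}.
\]

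The main computation is to identify the Fischer components explicitly. I claim
\begin{align*}
\uy\,\widetilde{M}_{k-1} &= \frac{k}{2(k+\lambda)}\,H_k + \frac{\lambda}{k+\lambda}\,(\ux\wedge\uy)\,\widetilde{H}_{k-1},\\
M_k &= \frac{k+m-2}{2(k+\lambda)}\,H_k - \frac{\lambda}{k+\lambda}\,(\ux\wedge\uy)\,\widetilde{H}_{k-1},
\end{align*}
where $\widetilde{H}_{k-1} := (|\ux||\uy|)^{k-1}\,C_{k-1}^{\lambda+1}(\langle\underline{\xi},\underline{\eta}\rangle)$. Since $k + (k+m-2) = 2k + 2\lambda = 2(k+\lambda)$, these two expressions sum to $H_k$, so by uniqueness of the Fischer decomposition it suffices to verify the single identity $\upy\,M_k = 0$. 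Once this is done, substituting into the previous display and regrouping the coefficients of $H_k$ and of $(\ux\wedge\uy)\,\widetilde{H}_{k-1}$ produces the stated formula, with coefficient of $H_k$ equal to $\tfrac{1}{2}(e^{i\beta(k+m-2)} + e^{-i\beta k}) - \tfrac{\lambda}{2(k+\lambda)}(e^{i\beta(k+m-2)} - e^{-i\beta k})$ and coefficient of $(\ux\wedge\uy)\widetilde{H}_{k-1}$ equal to $\tfrac{\lambda}{k+\lambda}(e^{i\beta(k+m-2)} - e^{-i\beta k})$.

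The principal obstacle is the explicit verification of $\upy M_k = 0$. For this one differentiates $H_k$ and $(\ux\wedge\uy)\,\widetilde{H}_{k-1}$ in $\uy$, using the Gegenbauer derivative identity $\tfrac{d}{dt} C_k^\lambda(t) = 2\lambda\, C_{k-1}^{\lambda+1}(t)$, the reordering formula $e_i(\ux\wedge\uy) = (\ux\wedge\uy)\, e_i + 2(y_i \ux - x_i \uy)$, and $\upy(\ux\wedge\uy) = (m-1)\,\ux$. The terms in $C_k^\lambda$ and $C_{k-1}^{\lambda+1}$ then cancel precisely under the prescribed coefficients $\tfrac{k+m-2}{2(k+\lambda)}$ and $-\tfrac{\lambda}{k+\lambda}$. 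This is the content of Lemma 3.1 of \cite{DBXu} in the specific case $\beta = \pi/2$, and it transfers verbatim here since the fractional parameter $\beta$ enters only through the exponential weights above and not through the purely algebraic Fischer decomposition itself.
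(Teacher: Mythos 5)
Your approach is the paper's own: the authors give no details beyond ``this can be done in a manner analogous to Lemma 3.1 in [De Bie--Xu]'', and that lemma is proved precisely by splitting $H_k=(|\ux||\uy|)^{k}C_k^{\lambda}(\langle\uxi,\underline{\eta}\rangle)$ into its two components in $\cH_k=\cM_k\oplus\uy\,\cM_{k-1}$, on which $\Gamma_{\uy}$ acts by $-k$ and $k+m-2$ respectively, and then letting the exponential act eigenvalue by eigenvalue. Your eigenvalue identities and the recombination of coefficients are correct and reproduce the stated formula, so in substance this is the same proof, written out in more detail than the paper provides.

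One step as written does not quite close, however. Verifying $\upy M_k=0$ alone does not identify your candidate $M_k$ as \emph{the} $\cM_k$-component of $H_k$: if $H_k=N_k+\uy N_{k-1}$ is the true decomposition and $M_k$ is merely \emph{some} monogenic polynomial, nothing yet forces $M_k=N_k$ unless you also know $H_k-M_k\in\uy\,\cP_{k-1}$ (then uniqueness of $\cP_k=\cM_k\oplus\uy\,\cP_{k-1}$ applies, and harmonicity upgrades the quotient to $\cM_{k-1}$). Your expression for $H_k-M_k$ involves $H_k$ and $\langle\ux,\uy\rangle\widetilde{H}_{k-1}$ and is not manifestly a left multiple of $\uy$, so this needs a separate check: either verify directly that $\Gamma_{\uy}\bigl(H_k-M_k\bigr)=(k+m-2)\bigl(H_k-M_k\bigr)$ (equivalently exhibit $\widetilde{M}_{k-1}$ and its monogenicity), or bypass the issue by computing the projection explicitly via $N_k=H_k+\tfrac{1}{m+2k-2}\,\uy\,\upy H_k$, which requires only the single computation of $\upy H_k$. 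The computation in Lemma 3.1 of De Bie--Xu does exactly this, so the gap is entirely fixable, but the uniqueness argument as you state it is incomplete.
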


Using this lemma and the previous computation, we arrive at the following series representation for the kernel of the fractional CFT.

\begin{theorem}
\label{FracSeries}
The fractional Clifford-Fourier transform $\cF_{ \alpha, \b} =  e^{ \frac{i \alpha m}{2}} e^{ i \b \Gamma} e^{ \frac{i \alpha}{2}(\Delta - |\ux|^{2})}$ is given by the integral transform
\[
\left(\pi (1- e^{-2i \alpha})\right)^{-m/2}\int_{\mathbb{R}^m} K_{\alpha,\b}(\ux,\uy) \ f(\ux) \ dx
\]
with integral kernel 
\[
K_{\alpha,\b}(\ux,\uy) =  \left(A_{\lambda}^{\alpha,\b} + B_{\lambda}^{\alpha,\b} + \left(\ux \wedge \uy \right) C_{\lambda}^{\alpha,\b}\right)e^{\frac{i}{2}( \cot \alpha) (|\ux|^2 + |\uy|^2)} 
\]
with
\begin{align*}
A_{\lambda}^{\alpha,\b}(w,\widetilde{z}) =& \,- 2^{\lambda-1} \Gamma(\lambda+1)\sum_{k=0}^{\infty} i^{-k}(e^{i \b (k+2 \lambda)} - e^{-i \b k}) \widetilde{z}^{-\lambda} J_{k+ \lambda}(\widetilde{z}) \;  C_{k}^{\lambda}(w),\\
B_{\lambda}^{\alpha,\b}(w,\widetilde{z}) = & \,  2^{\lambda-1} \Gamma(\lambda)\sum_{k=0}^{\infty}(k+ \lambda) i^{-k}(e^{i \b (k+2 \lambda)} +e^{-i \b k})\widetilde{z}^{-\lambda} J_{k+ \lambda}(\widetilde{z}) \; C_{k}^{\lambda}(w),\\
C_{\lambda}^{\alpha,\b}(w,\widetilde{z}) = & \,   \frac{2^{\lambda} \Gamma(\lambda+1)}{\sin{\alpha}}\sum_{k=1}^{\infty} i^{-k}(e^{i \b (k+2 \lambda)} - e^{-i \b k}) \widetilde{z}^{-\lambda-1} J_{k+ \lambda}(\widetilde{z}) \;  C_{k-1}^{\lambda+1} (w),
\end{align*}
where $\widetilde{z} = (|\ux||\uy|)/ \sin{\alpha}$, $w = \langle \underline{\xi},\underline{\eta} \rangle$ and $\lambda = (m-2)/2$.
\end{theorem}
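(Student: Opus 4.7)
\medskip
\noindent\textbf{Proof plan.}
All the ingredients are essentially in place before the statement, so the plan is to assemble them carefully rather than to invent new machinery. The starting point is the identity already derived in the discussion preceding the theorem,
\[
K_{\alpha,\b}(\ux,\uy) \;=\; e^{\frac{i}{2}(\cot \alpha)(|\ux|^{2}+|\uy|^{2})}\, e^{i\b \Gamma_{\uy}}\!\left(e^{-i\langle \ux,\uy\rangle/\sin\alpha}\right),
\]
so the whole task is to compute the action of $e^{i\b\Gamma_{\uy}}$ on the rescaled plane wave.

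The first step is to expand $e^{-i\langle \ux,\uy\rangle/\sin\alpha}$ via the Gegenbauer--Bessel expansion \eqref{planewave}, with $\widetilde{z}=|\ux||\uy|/\sin\alpha$ and $w=\langle \underline{\xi},\underline{\eta}\rangle$, rewriting each term in the form $(i\sin\alpha)^{-k}(|\ux||\uy|)^{k}\widetilde{J}_{k+\lambda}(\widetilde{z})\,C_{k}^{\lambda}(w)$. Since $\Gamma_{\uy}$ commutes with any radial function of $\uy$, it passes through the Bessel factor $\widetilde{J}_{k+\lambda}(\widetilde{z})$, so the only nontrivial action left is on the polynomial $(|\ux||\uy|)^{k}C_{k}^{\lambda}(w)$. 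This is exactly the quantity evaluated by Lemma~\ref{actionGamma}, which splits the output into two scalar multiples of $(|\ux||\uy|)^{k}C_{k}^{\lambda}(w)$ (with weights $\tfrac12(e^{i\b(k+2\lambda)}+e^{-i\b k})$ and $-\tfrac{\lambda}{2(k+\lambda)}(e^{i\b(k+2\lambda)}-e^{-i\b k})$) plus a bivector piece proportional to $(\ux\wedge\uy)(|\ux||\uy|)^{k-1}C_{k-1}^{\lambda+1}(w)$.

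The second step is a bookkeeping simplification. Using $|\ux||\uy|=\widetilde{z}\sin\alpha$ one rewrites
\[
(i\sin\alpha)^{-k}(|\ux||\uy|)^{k}\widetilde{J}_{k+\lambda}(\widetilde{z}) \;=\; i^{-k}\widetilde{z}^{-\lambda}J_{k+\lambda}(\widetilde{z}),
\]
which is the form appearing in $A_{\lambda}^{\alpha,\b}$ and $B_{\lambda}^{\alpha,\b}$. For the bivector contribution, the extra factor $(|\ux||\uy|)^{-1}$ produces the extra $1/\sin\alpha$ in front of $C_{\lambda}^{\alpha,\b}$ and upgrades $\widetilde z^{-\lambda}$ to $\widetilde z^{-\lambda-1}$; the $k=0$ term vanishes because $C_{-1}^{\lambda+1}\equiv 0$, so that sum starts at $k=1$. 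Collecting the prefactors $2^{\lambda}\Gamma(\lambda)(k+\lambda)$ from the plane wave expansion together with the three coefficients from Lemma~\ref{actionGamma} (noting $\lambda\,\Gamma(\lambda)=\Gamma(\lambda+1)$) yields exactly the series $A_{\lambda}^{\alpha,\b}$, $B_{\lambda}^{\alpha,\b}$ and $C_{\lambda}^{\alpha,\b}$ written in the statement.

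The only nontrivial point to be careful about is the interchange of the operator $e^{i\b\Gamma_{\uy}}$ with the infinite Gegenbauer--Bessel sum. This is the main obstacle, but it is mild: $\Gamma_{\uy}$ preserves the degree of homogeneous polynomials, so its exponential acts within each fixed-$k$ summand, and the asymptotics $J_{k+\lambda}(\widetilde{z})\sim (\widetilde{z}/2)^{k+\lambda}/\Gamma(k+\lambda+1)$ together with polynomial bounds on $C_{k}^{\lambda}(w)$ on $[-1,1]$ show the resulting series converges absolutely and uniformly on compact subsets where $\widetilde z$ stays bounded, justifying the term-by-term manipulation and finishing the proof.
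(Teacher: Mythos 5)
Your proposal is correct and follows essentially the same route as the paper: expand the rescaled plane wave via the Gegenbauer--Bessel series \eqref{planewave}, commute $e^{i\b\Gamma_{\uy}}$ past the radial Bessel factors, apply Lemma~\ref{actionGamma} to $(|\ux||\uy|)^{k}C_{k}^{\lambda}(w)$, and collect coefficients. The only difference is that you supply a convergence justification for the term-by-term interchange, which the paper leaves as a formal computation.
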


\begin{rem}
It is important to note that the kernels of these integral transforms are not symmetric, in the sense that $K_{\alpha,\b}(\ux,\uy) \neq K_{\alpha,\b}(\uy,\ux)$. Hence, we adopt the convention that we always integrate over the first variable in the kernel.
\end{rem}

The functions $A_{\lambda}^{\alpha,\b}$, $B_{\lambda}^{\alpha,\b}$ and $C_{\lambda}^{\alpha,\b}$ satisfy nice recursive relations. They are given in the following lemma.
\begin{lem} \label{RecursionsProps}
The following identities hold
\begin{align*}
A_{\lambda}^{\alpha,\b}(w,\widetilde{z}) =&  \frac{\lambda}{\lambda -1} \frac{i e^{i\b}}{\widetilde{z}} \partial_{w}A_{\lambda-1}^{\alpha,\b}(w,\widetilde{z})&\lambda \ge 2,\\
B_{\lambda}^{\alpha,\b}(w,\widetilde{z}) =&   \frac{i e^{i \b}}{\widetilde{z}} \partial_{w}B_{\lambda-1}^{\alpha,\b}(w,\widetilde{z})&\lambda \ge 1,\\
C_{\lambda}^{\alpha,\b}(w,\widetilde{z}) =& \frac{i e^{i \b}}{ \widetilde{z}} \partial_{w} C_{\lambda-1}^{\alpha,\b}(w,\widetilde{z})&\lambda \ge 1.
\end{align*}
\end{lem}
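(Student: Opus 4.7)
The strategy is to differentiate each series termwise with respect to $w$, use the classical Gegenbauer derivative identity
\[
\partial_{w} C_{k}^{\mu}(w) = 2\mu\, C_{k-1}^{\mu+1}(w),
\]
and then re-index the sum so as to recover the series defining the next member of the family. Termwise differentiation is justified, at least for the purpose of a formal identity between analytic expressions, by the uniform convergence of the underlying Bessel/Gegenbauer expansions on compact subsets in $w$ (the kernel is entire in the relevant sense by Theorem~\ref{FracSeries}).

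Carrying this out for $B$: I would write
\[
\partial_{w}B_{\lambda-1}^{\alpha,\b}(w,\widetilde z)
= 2^{\lambda-2}\Gamma(\lambda-1)\sum_{k=0}^{\infty}(k+\lambda-1)\,i^{-k}\bigl(e^{i\b(k+2\lambda-2)}+e^{-i\b k}\bigr)\widetilde z^{-\lambda+1} J_{k+\lambda-1}(\widetilde z)\,\cdot 2(\lambda-1)C_{k-1}^{\lambda}(w),
\]
(the $k=0$ term vanishes, since $C_{-1}^{\lambda}\equiv 0$) and then substitute $k\mapsto k+1$. The phase factors reorganize as $e^{i\b(k+2\lambda-1)}=e^{-i\b}\,e^{i\b(k+2\lambda)}$ and $e^{-i\b(k+1)}=e^{-i\b}\,e^{-i\b k}$, producing a common factor $-i\,e^{-i\b}\widetilde z$ after pulling out $i^{-k-1}$ and one power of $\widetilde z$. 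Using $\Gamma(\lambda)=(\lambda-1)\Gamma(\lambda-1)$ the remaining coefficient $2^{\lambda-1}\Gamma(\lambda)(k+\lambda)$ matches the one appearing in $B_{\lambda}^{\alpha,\b}$, giving
\[
\partial_{w}B_{\lambda-1}^{\alpha,\b}=-i\,e^{-i\b}\,\widetilde z\, B_{\lambda}^{\alpha,\b},
\]
which is the stated recursion. The case of $C_{\lambda}^{\alpha,\b}$ is handled identically, starting from the series $C_{\lambda-1}^{\alpha,\b}$, applying $\partial_{w}C_{k-1}^{\lambda}=2\lambda\,C_{k-2}^{\lambda+1}$, shifting $k\mapsto k+1$, and recognizing that the $\widetilde z^{-\lambda}$ in $C_{\lambda-1}^{\alpha,\b}$ becomes $\widetilde z^{-\lambda-1}\cdot \widetilde z$, with the extra $\widetilde z$ absorbed into the factor $-i\,e^{-i\b}\widetilde z$.

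For $A_{\lambda}^{\alpha,\b}$ the same mechanism works, but with an extra bookkeeping subtlety: in $A_{\lambda-1}^{\alpha,\b}$ the overall gamma factor is $\Gamma(\lambda)=\lambda\Gamma(\lambda-1)$ with no extra $(k+\lambda-1)$ present inside the sum, so the derivative produces a factor $2(\lambda-1)$ instead of the $2(\lambda-1)(k+\lambda-1)$ combination seen for $B$; normalizing this against the prefactor $\Gamma(\lambda+1)=\lambda\Gamma(\lambda)$ of $A_{\lambda}^{\alpha,\b}$ yields the mismatch $\tfrac{\lambda-1}{\lambda}$, which explains the correcting factor $\tfrac{\lambda}{\lambda-1}$ in the first recursion and the constraint $\lambda\geq 2$. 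The main (and only non-trivial) obstacle is this index-shift bookkeeping: matching the four pieces ($\widetilde z$-power, $i^{-k}$, $e^{\pm i\b\cdot}$, Gamma/linear-in-$k$ coefficients) simultaneously. Once that is done, the three identities follow by inspection of the resulting series.
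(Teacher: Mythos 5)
Your proposal is correct and is essentially the paper's own proof: the paper likewise derives the recursions by differentiating the series of Theorem~\ref{FracSeries} termwise and invoking $\frac{d}{dw}C_k^{\lambda}(w)=2\lambda C_{k-1}^{\lambda+1}(w)$, only omitting the index-shift bookkeeping that you spell out. Your accounting of the phase factors, the $\Gamma$-function normalizations, and the origin of the extra factor $\frac{\lambda}{\lambda-1}$ (and the restriction $\lambda\ge 2$) in the $A$-recursion all check out.
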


\begin{proof}
This follows immediately from Theorem \ref{FracSeries} and the fact that $\frac{d}{d w} C^{\lambda}_{k}(w) = 2 \lambda  C^{\lambda+1}_{k-1}(w)$.
\end{proof}

\section{Explicit representation of the kernel}
\setcounter{equation}{0}
\label{ExplicitCF}

We determine the explicit formula of $K_{\alpha,\b}(\ux,\uy)$ on $\RR^m$ in the case of even dimension. 

We first obtain the result in dimension 2, using the series expansion obtained in the previous section. Note that this result can also be proven using Clifford algebra techniques in a similar way as was done in \cite{DBDSC} for the case where $\alpha =\beta$. This is left as an exercise for the reader.

Subsequently, we derive the kernel for even dimensions larger than 2.

\subsection{The case $m=2$}
\label{CF2}


In this case $\lambda =0$. We need the well-known relation \cite[(4.7.8)]{Sz} 
$$
\lim_{\lambda \rightarrow 0}  \lambda^{-1} C_n^\lambda (w) = (2/n) \cos n \theta, \quad w = \cos{\theta}, \quad n \geq 1.   
$$
We then compute the three series given in Theorem \ref{FracSeries}.
The first one reduces to $A_0^{\alpha,\b}(w,\widetilde{z}) =0$. Next we obtain $B_0^{\alpha,\b}(w,\widetilde{z})$ as
\begin{align*}
 B_0^{\a,\b}(w,\widetilde{z}) & = J_0(\widetilde{z}) + 2 \sum_{k=1}^\infty i^{-k} J_{k}(\widetilde{z}) \cos{k \b} \cos{k \theta}\\
&= \frac{1}{2} \left(J_0(\widetilde{z}) + 2 \sum_{k=1}^\infty i^{-k} J_{k}(\widetilde{z}) \cos{k (\b-\theta)} \right)\\
& \quad + \frac{1}{2} \left(J_0(\widetilde{z}) + 2 \sum_{k=1}^\infty i^{-k} J_{k}(\widetilde{z}) \cos{k (\b+\theta)} \right)\\
&=\frac{1}{2} e^{ -i \widetilde{z} \cos{(\b+\theta)}} + \frac{1}{2} e^{ -i \widetilde{z} \cos{(\b-\theta)}}\\
&= \frac{1}{2} e^{-i z w \frac{\cos{\b}}{\sin{\a}}} \left( e^{-i z \sin{\theta}\frac{\sin{\b}}{\sin{\a}}} + e^{i z \sin{\theta}\frac{\sin{\b}}{\sin{\a}}} \right)\\
&= \cos{\left(z\sin{\theta}\frac{\sin{\b}}{\sin{\a}}\right)} e^{-i z w \frac{\cos{\b}}{\sin{\a}}}.
\end{align*}
In the 4th line we have used the well-known decomposition (see \cite{Er}, p. 7, formula (27))
\[
e^{ -i \widetilde{z} \cos{\theta}} = J_0(\widetilde{z}) + 2 \sum_{k=1}^\infty i^{-k} J_{k}(\widetilde{z}) \cos{k \theta}.
\]

Similarly we calculate $C_0^{\alpha,\b}(w,\widetilde{z})$ as
 \begin{align*}
 C_0^{\alpha,\b}(w,\widetilde{z}) & = \frac{2i}{ \widetilde{z}\sin{\alpha}} \sum_{k=1}^\infty i^{-k} J_{k}(\widetilde{z})  \frac{\sin{k\b} \sin{k\theta}}{\sin \theta}\\
 &=\frac{i}{2 z \sin{\theta}} \left( J_0(\widetilde{z})+ 2\sum_{k=1}^\infty i^{-k} J_{k}(\widetilde{z})  \cos{k(\b-\theta)}\right)\\
 & \quad - \frac{i}{2 z \sin{\theta}} \left( J_0(\widetilde{z})+ 2\sum_{k=1}^\infty i^{-k} J_{k}(\widetilde{z})  \cos{k(\b+\theta)}\right)\\
 &=\frac{i}{2 z \sin{\theta}}  \left( e^{ -i \widetilde{z} \cos{(\b-\theta)}} -  e^{ -i \widetilde{z} \cos{(\b+\theta)}} \right)\\
 &= \frac{\sin{(z\sin{\theta}\frac{\sin{\b}}{\sin{\a}})}}{z\sin{\theta}}e^{-i z w \frac{\cos{\b}}{\sin{\a}}} .
\end{align*}

Hence we have obtained the following
\begin{thm}
The kernel of the fractional Clifford-Fourier transform in dimension $m=2$ is given by
\begin{align*}
K_{\alpha,\b}(\ux,\uy) &= e^{\frac{i}{2}( \cot \alpha) (|\ux|^2 + |\uy|^2)}  e^{ i \b \Gamma_{\uy}} \left(  e^{-i \la \ux,\uy \ra / \sin \alpha}\right)\\
& = \left(\cos{ \left(t\frac{\sin{\b}}{\sin{\a}}\right)} + \left(\ux \wedge \uy \right) \frac{\sin{\left(t \frac{\sin{\b}}{\sin{\a}}\right)}}{t}\right)e^{-i \langle \ux,\uy \rangle\frac{\cos{\b}}{\sin{\a}}}e^{\frac{i}{2}( \cot \alpha) (|\ux|^2 + |\uy|^2)} 
\end{align*}
with $t= |\ux \wedge \uy| =   \sqrt{|\ux|^{2} |\uy|^{2}-\langle \ux, \uy \rangle^2}$.
\label{KernelDim2}
\end{thm}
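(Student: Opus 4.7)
The plan is to specialize Theorem \ref{FracSeries} to the case $\lambda = 0$ (i.e.\ $m=2$) and to evaluate each of the three Gegenbauer--Bessel series $A_0^{\a,\b}$, $B_0^{\a,\b}$, $C_0^{\a,\b}$ in closed form, then to rewrite the result geometrically in terms of $t = |\ux \wedge \uy|$ and $\langle \ux,\uy\rangle$. The main inputs are the Gegenbauer limit $\lim_{\lambda\to 0}\lambda^{-1}C_n^\lambda(\cos\theta) = (2/n)\cos n\theta$ for $n\ge 1$, the identity $C_{k-1}^1(\cos\theta) = \sin k\theta/\sin\theta$, and the Jacobi--Anger expansion $e^{-i\widetilde{z}\cos\phi} = J_0(\widetilde{z}) + 2\sum_{k\ge 1} i^{-k}J_k(\widetilde{z})\cos k\phi$. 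Throughout I write $z = |\ux||\uy|$, $\widetilde{z} = z/\sin\a$, and $w = \langle\underline{\xi},\underline{\eta}\rangle = \cos\theta$.

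For $A_0^{\a,\b}$: the prefactor $2^{\lambda-1}\Gamma(\lambda+1)$ is regular at $\lambda = 0$, the $k=0$ summand carries the vanishing factor $e^{2i\b\lambda} - 1$, and for $k\ge 1$ the Gegenbauer polynomial $C_k^\lambda(w)$ itself vanishes at $\lambda = 0$; hence $A_0^{\a,\b} = 0$. For $B_0^{\a,\b}$ the apparent singularity in $\Gamma(\lambda)$ is absorbed by writing $\Gamma(\lambda)(k+\lambda) = \Gamma(\lambda+1)(k/\lambda + 1)$, which combined with the Gegenbauer limit yields contributions $J_0(\widetilde{z})$ at $k=0$ and $2 i^{-k}\cos k\b \cos k\theta\, J_k(\widetilde{z})$ at $k \ge 1$. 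Using $2\cos k\b \cos k\theta = \cos k(\b-\theta) + \cos k(\b+\theta)$ and applying Jacobi--Anger to each half then gives $B_0^{\a,\b} = \tfrac{1}{2}\bigl(e^{-i\widetilde{z}\cos(\b-\theta)} + e^{-i\widetilde{z}\cos(\b+\theta)}\bigr)$; expanding $\widetilde{z}\cos(\b\pm\theta) = \widetilde{z}\cos\b\cos\theta \mp \widetilde{z}\sin\b\sin\theta$ factors this as $\cos\bigl(z\sin\theta\,\tfrac{\sin\b}{\sin\a}\bigr) e^{-izw\cos\b/\sin\a}$.

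The computation of $C_0^{\a,\b}$ proceeds in parallel: at $\lambda = 0$ the exponential combination $e^{i\b(k+2\lambda)} - e^{-i\b k}$ reduces to $2i\sin k\b$, the Gegenbauer factor $C_{k-1}^{\lambda+1}(w)$ becomes $\sin k\theta/\sin\theta$, and using $2\sin k\b\sin k\theta = \cos k(\b-\theta) - \cos k(\b+\theta)$ with Jacobi--Anger produces $C_0^{\a,\b} = \sin\bigl(z\sin\theta\,\tfrac{\sin\b}{\sin\a}\bigr)/(z\sin\theta)\cdot e^{-izw\cos\b/\sin\a}$. Plugging the three pieces into Theorem \ref{FracSeries} and recognising $z\sin\theta = |\ux\wedge\uy| = t$ and $zw = \langle\ux,\uy\rangle$ produces the stated formula. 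The only subtle point is the bookkeeping in the $\lambda \to 0$ limits and the interchange of this limit with the infinite sum, but this is harmless because the Bessel factors $J_{k+\lambda}(\widetilde{z})$ decay factorially in $k$ at fixed $\widetilde{z}$ and the Gegenbauer polynomials are locally bounded near $\lambda = 0$, so dominated convergence applies; alternatively, one could take the Clifford-algebraic route of \cite{DBDSC}, which the authors leave to the reader.
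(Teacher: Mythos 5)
Your proposal is correct and follows essentially the same route as the paper: specializing the series of Theorem \ref{FracSeries} to $\lambda=0$, using the Gegenbauer limit $\lim_{\lambda\to 0}\lambda^{-1}C_n^{\lambda}(\cos\theta)=(2/n)\cos n\theta$ together with the Jacobi--Anger expansion and product-to-sum identities to evaluate $A_0^{\alpha,\beta}$, $B_0^{\alpha,\beta}$, $C_0^{\alpha,\beta}$ in closed form, and then reassembling via $z\sin\theta=t$ and $zw=\langle\ux,\uy\rangle$. Your extra remarks on the convergence justification and the alternative Clifford-algebraic route are consistent with, but not needed beyond, what the paper does.
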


\subsection{The case $m>2$}

Observe first that
\[
A_{1}^{\alpha,\b} = - i e^{i \b} (\sin{\alpha}) C_{0}^{ \alpha,\b}
\]
and
\begin{align*}
B_{0}^{ \alpha,\b} & = e^{-i z w \frac{\cos{\b}}{\sin{\a}}} \cos{ \left(z \sqrt{1-w^{2}}\frac{\sin{\b}}{\sin{\a}}\right)}\\
C_{0}^{ \alpha,\b} &= e^{-i z w \frac{\cos{\b}}{\sin{\a}}}\frac{\sin{\a}}{\sin{\b}}z^{-2}w^{-1} \partial_{w}  \cos{ \left(z \sqrt{1-w^{2}}\frac{\sin{\b}}{\sin{\a}}\right)}.
\end{align*}

Using the recursion relations obtained in Lemma \ref{RecursionsProps} we subsequently obtain
\begin{align*}
A_{k}^{\alpha,\b} &= - k (i e^{i \b})^{k} \frac{1}{\sin{\b}} \, \widetilde{z}^{-k-1} \partial_{w}^{k-1} \left(e^{-i z w \frac{\cos{\b}}{\sin{\a}}} w^{-1}\partial_{w}\cos{ \left(z \sqrt{1-w^{2}}\frac{\sin{\b}}{\sin{\a}}\right)} \right),\\
B_{k}^{\alpha,\b} &= (i e^{i \b})^{k}\widetilde{z}^{-k}  \partial_{w}^{k}\left(e^{-i z w \frac{\cos{\b}}{\sin{\a}}} \cos{ \left(z \sqrt{1-w^{2}}\frac{\sin{\b}}{\sin{\a}}\right)} \right), \\
C_{k}^{\alpha,\b} &= (i e^{i \b})^{k} \frac{1}{\sin{\b}\sin{\a}} \widetilde{z}^{-k-2} \partial_{w}^{k} \left(e^{-i z w \frac{\cos{\b}}{\sin{\a}}}w^{-1} \partial_{w}  \cos{ \left(z \sqrt{1-w^{2}}\frac{\sin{\b}}{\sin{\a}}\right)} \right).
\end{align*}
Introducing a new variable $z^{*} = \widetilde{z} \sin{\b}$, these formulae simplify to
\begin{align*}
A_{k}^{\alpha,\b} &= - k (i e^{i \b})^{k} (\sin{\b})^{k} \, (z^{*})^{-k-1} \partial_{w}^{k-1} \left(e^{-i z^{*} w \cot{\b}} w^{-1}\partial_{w}\cos{ \left(z^{*} \sqrt{1-w^{2}}\right)} \right),\\
B_{k}^{\alpha,\b} &=(i e^{i \b})^{k}(\sin{\b})^{k}(z^{*})^{-k}  \partial_{w}^{k}\left(e^{-i z^{*} w \cot{\b}} \cos{ \left(z^{*} \sqrt{1-w^{2}}\right)} \right), \\
C_{k}^{\alpha,\b} &=  (i e^{i \b})^{k} \frac{(\sin{\b})^{k+1}}{\sin{\a}}(z^{*})^{-k-2}  \partial_{w}^{k} \left(e^{-i z^{*} w \cot{\b}}w^{-1} \partial_{w}  \cos{ \left(z^{*} \sqrt{1-w^{2}}\right)} \right).
\end{align*}
Let us now compute $A_{k}^{\alpha,\b}$ explicitly. We find
\begin{align*}
A_{k}^{\alpha,\b} &= - k (i e^{i \b})^{k} (\sin{\b})^{k}  \sum_{\ell=0}^{k-1} 
\binom{k-1}{\ell} (z^{\ast})^{\ell-k+1} \partial_{w}^{k-\ell-1} \left(e^{-i z^{\ast} w \cot{\b}}\right) \\
& \qquad \times (z^{\ast})^{-\ell-2} \partial_{w}^{\ell} \left(w^{-1}\partial_{w} \cos{(z^{\ast} \sqrt{1-w^2})}\right) \\
&= \left(\frac{\pi}{2} \right)^{\frac{1}{2}} k\, i\, e^{i k \b} \sin{\b} \ (\cos{\b})^{k-1} e^{-i z^{\ast} w \cot{\b}} \sum_{\ell=0}^{k-1} 
\binom{k-1}{\ell} (i \tan{\b})^{\ell} \ C_{\ell}^{*}(s^{\ast},t^{\ast})
\end{align*}
where we have introduced the variables $s^{\ast} = z^{\ast} w = \frac{\sin{\b}}{\sin{\alpha}} \langle \ux,\uy \rangle$ and $t^{\ast} = z^{\ast} \sqrt{1-w^2}=\frac{\sin{\b}}{\sin{\alpha}} \ |\ux \wedge \uy| $ and used the definition of $C_{\ell}^{*}(s^{\ast},t^{\ast})$ in Theorem \ref{EvenExplicitCFT} and formula (\ref{relsOrdCFT}).

In a similar way, we can compute $B_{k}^{\alpha,\b}$ and $C_{k}^{\alpha,\b}$, yielding
\begin{displaymath}
B_{k}^{\alpha,\b} = - \left(\frac{\pi}{2} \right)^{\frac{1}{2}} e^{i k \b} (\cos{\b})^{k} e^{-i z^{\ast} w \cot{\b}} \sum_{\ell=0}^{k} 
\binom{k}{\ell} (i \tan{\b})^{\ell} \ B_{\ell}^{*}(s^{\ast},t^{\ast})
\end{displaymath}
and
\begin{displaymath}
C_{k}^{\alpha,\b} = - \left(\frac{\pi}{2} \right)^{\frac{1}{2}} e^{i k \b} \frac{\sin{\b}}{\sin{\alpha}} \ (\cos{\b})^{k} e^{-i z^{\ast} w \cot{\b}} \sum_{\ell=0}^{k} 
\binom{k}{\ell} (i \tan{\b})^{\ell} \ C_{\ell}^{*}(s^{\ast},t^{\ast}).
\end{displaymath}
This leads to the following theorem.
\begin{thm}\label{EvenExplicit}
The kernel of the fractional Clifford-Fourier transform in even dimension $m>2$ is given by
\begin{align*}
K_{\alpha,\b}(\ux,\uy) &= e^{\frac{i}{2}( \cot \alpha) (|\ux|^2 + |\uy|^2)}  e^{ i \b \Gamma_{\uy}} \left(  e^{-i \la \ux,\uy \ra / \sin \alpha}\right)\\
& =  \left(\frac{\pi}{2} \right)^{\frac{1}{2}} e^{i \b(m-2)/2} (\cos{\b})^{(m-2)/2} e^{-i s^{\ast} \cot{\b}} e^{\frac{i}{2}( \cot \alpha) (|\ux|^2 + |\uy|^2)} \\
&\qquad \times \left(A_{(m-2)/2}^{\alpha,\b, *}(s^{\ast},t^{\ast}) +B_{(m-2)/2}^{\alpha,\b, *}(s^{\ast},t^{\ast})+ (\ux \wedge \uy) \ C_{(m-2)/2}^{\alpha,\b, *}(s^{\ast},t^{\ast})\right) 
\end{align*}
where $s^{\ast}= \frac{\sin{\b}}{\sin{\alpha}} \langle \ux,\uy \rangle$ and $t^{\ast}= \frac{\sin{\b}}{\sin{\alpha}} |\ux \wedge \uy|$ and
\begin{align*}
A_{(m-2)/2}^{\alpha,\b,*}(s^{\ast},t^{\ast}) &= i \ \left( \frac{m-2}{2}\right) \tan{\b} \sum_{\ell=0}^{\frac{m}{2}-2} 
\binom{\frac{m}{2}-2}{\ell} (i \tan{\b})^{\ell} \ C_{\ell}^{*}(s^{\ast},t^{\ast}),\\
B_{(m-2)/2}^{\alpha,\b,*}(s^{\ast},t^{\ast})&= - \sum_{\ell=0}^{\frac{m}{2}-1} 
\binom{\frac{m}{2}-1}{\ell} (i \tan{\b})^{\ell} \ B_{\ell}^{*}(s^{\ast},t^{\ast}),\\
C_{(m-2)/2}^{\alpha,\b,*}(s^{\ast},t^{\ast}) &= -  \frac{\sin{\b}}{\sin{\alpha}} \sum_{\ell=0}^{\frac{m}{2}-1} 
\binom{\frac{m}{2}-1}{\ell} (i \tan{\b})^{\ell} \ C_{\ell}^{*}(s^{\ast},t^{\ast})
\end{align*}
with $B_{\ell}^{*}(s^{\ast},t^{\ast})$ and $C_{\ell}^{*}(s^{\ast},t^{\ast})$ defined in Theorem \ref{EvenExplicitCFT} and with $\widetilde{J}_{\alpha}(t^{\ast}) = (t^{\ast})^{-\alpha} J_{\alpha}(t^{\ast})$.
\end{thm}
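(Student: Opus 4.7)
The plan is to assemble the stated formula from the three building blocks already prepared before the theorem, namely the explicit expressions for $A_k^{\alpha,\beta}$, $B_k^{\alpha,\beta}$, $C_k^{\alpha,\beta}$ obtained by iterating Lemma \ref{RecursionsProps}. First I would verify the base cases: $A_0^{\alpha,\beta}=0$ (this drops out of the series in Theorem \ref{FracSeries}), while $B_0^{\alpha,\beta}$ and $C_0^{\alpha,\beta}$ are read off from the $m=2$ computation performed in subsection \ref{CF2} (the same series identities apply with $\lambda=0$). For $A$, since the recursion in Lemma \ref{RecursionsProps} requires $\lambda\ge 2$, I would handle $A_1^{\alpha,\beta}$ separately by directly evaluating the series in Theorem \ref{FracSeries} at $\lambda=1$, which, after using $C_0^{\lambda}(w)=1$ and $C_k^\lambda$ basic identities, gives exactly $A_1^{\alpha,\beta}=-ie^{i\beta}(\sin\alpha)\,C_0^{\alpha,\beta}$.

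Next I would iterate the three recursion relations $k-1$ times (for $A$) and $k$ times (for $B$, $C$), producing the pre-factored expressions
\[
A_k^{\alpha,\beta},\,B_k^{\alpha,\beta},\,C_k^{\alpha,\beta}\ \propto\ (ie^{i\beta})^{k}\,\widetilde z^{-k-\varepsilon}\partial_w^{k-\delta}\bigl(e^{-izw\cos\beta/\sin\alpha}\,\mathcal{G}(w,z)\bigr)
\]
that are displayed in the paragraph just before the theorem, where $\mathcal{G}$ is either $\cos(z\sqrt{1-w^2}\sin\beta/\sin\alpha)$ or $w^{-1}\partial_w$ thereof. At this point I would introduce $z^\ast=\widetilde z\sin\beta$, $s^\ast=z^\ast w$ and $t^\ast=z^\ast\sqrt{1-w^2}$, which absorbs all the $\sin\beta$ and $\sin\alpha$ scalings into neat arguments and reduces $\mathcal G$ to $\cos(z^\ast\sqrt{1-w^2})$.

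The core step is then the Leibniz expansion
\[
\partial_w^{n}\bigl(e^{-is^\ast\cot\beta}\,\mathcal G(w,z^\ast)\bigr)=\sum_{\ell=0}^{n}\binom{n}{\ell}(-iz^\ast\cot\beta)^{n-\ell}\,e^{-is^\ast\cot\beta}\,\partial_w^{\ell}\mathcal G(w,z^\ast),
\]
so that the $(n-\ell)$-fold derivative of the exponential produces the factor $(-iz^\ast\cot\beta)^{n-\ell}$, while the remaining $\partial_w^{\ell}$ acting on $\mathcal G$ (or its $w^{-1}\partial_w$-variant) is, by the identities \eqref{relsOrdCFT}, precisely what defines $A_\ell^\ast(s^\ast,t^\ast)$, $B_\ell^\ast(s^\ast,t^\ast)$ or $C_\ell^\ast(s^\ast,t^\ast)$. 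Factoring out $(\cos\beta)^{k}$ (or $(\cos\beta)^{k-1}$ in the $A$-case) from $(-z^\ast\cot\beta)^{k-\ell}\widetilde z^{-k}$ leaves the binomial weight $(i\tan\beta)^{\ell}$, matching the sums in the theorem. Taking $k=(m-2)/2$ and adding the three contributions, together with the common prefactor $e^{ik\beta}e^{-is^\ast\cot\beta}e^{(i/2)\cot\alpha(|\ux|^2+|\uy|^2)}$, yields the stated identity.

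The main obstacle is bookkeeping: tracking the $i^k$, $e^{ik\beta}$, $(\cos\beta)^{k}$, $(\sin\beta)^{k}$ and $\sin\alpha$ factors through the change of variable $\widetilde z\mapsto z^\ast$ and checking that the prefactor of $A_k^{\alpha,\beta}$ (which carries an extra $k\, i\, \sin\beta$ because its recursion starts at $A_1$ rather than $A_0$) yields exactly $i\,((m-2)/2)\tan\beta$ times the inner sum indexed up to $m/2-2$, while the $B$ and $C$ sums run up to $m/2-1$. No substantial new identity is needed beyond the Leibniz rule and \eqref{relsOrdCFT}; everything else is algebraic reorganization of the preceding two pages.
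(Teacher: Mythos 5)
Your proposal follows essentially the same route as the paper: establish the base cases $B_0^{\alpha,\beta}$, $C_0^{\alpha,\beta}$ and the identity $A_1^{\alpha,\beta}=-ie^{i\beta}(\sin\alpha)C_0^{\alpha,\beta}$, iterate the recursions of Lemma \ref{RecursionsProps}, substitute $z^{\ast}=\widetilde{z}\sin\beta$, and expand by the Leibniz rule so that the inner derivatives are recognized via \eqref{relsOrdCFT} as $B_\ell^{*}$ and $C_\ell^{*}$. This is exactly the paper's argument, so the proposal is correct.
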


Taking into account the expression of $C_{\ell}^{*}(s^{\ast},t^{\ast})$ and executing the substitution $\ell = p+j$, we obtain consecutively
\begin{align*}
& \sum_{\ell=0}^{k-1} 
\binom{k-1}{\ell} (i \tan{\b})^{\ell} C_{\ell}^{*}(s^{\ast},t^{\ast})\\
& =- \sum_{\ell=0}^{k-1} \sum_{j=0}^{\left\lfloor \frac{\ell}{2}\right\rfloor} \binom{k-1}{\ell} (i \tan{\b})^{\ell} (s^{\ast})^{\ell-2j} \frac{\ell!}{2^{j}j! (\ell-2j)!} \widetilde{J}_{\ell-j + \frac{1}{2}}(t^{\ast})\\
&= - \sum_{p=0}^{k-1} \left(\sum_{j=0}^{\min{(p,k-1-p)}} \frac{(k-1)!}{(k-1-p-j)! j! (p-j)!} \left(\frac{i \tan{\b}}{2s^{\ast}}\right)^{j} \right)(i s^{\ast} \tan{\b})^{p}  \widetilde{J}_{p+ \frac{1}{2}}(t^{\ast}).
\end{align*}
Similarly, we find that
\begin{align*}
& \sum_{\ell=0}^{k} 
\binom{k}{\ell} (i \tan{\b})^{\ell} B_{\ell}^{*}(s^{\ast},t^{\ast})\\
&= - \sum_{p=0}^{k} \left(\sum_{j=0}^{\min{(p,k-p)}} \frac{k!}{(k-p-j)! j! (p-j)!} \left(\frac{i \tan{\b}}{2s^{\ast}}\right)^{j} \right)(i s^{\ast} \tan{\b})^{p}  \widetilde{J}_{p- \frac{1}{2}}(t^{\ast}).
\end{align*}
Hence, we obtain the following explicit formulae for $A_{(m-2)/2}^{\alpha,\b,*}$, $B_{(m-2)/2}^{\alpha,\b,*}$ and $C_{(m-2)/2}^{\alpha,\b,*}$ in terms of a finite sum of Bessel functions:
\begin{align*}
A_{(m-2)/2}^{\alpha,\b,*} (s^{\ast},t^{\ast})&= - i \left( \frac{m-2}{2} \right) \tan{\b} \sum_{p=0}^{m/2-2} \left(\sum_{j=0}^{\min{(p,m/2-2-p)}} \frac{(m/2-2)!}{(m/2-2-p-j)! j! (p-j)!} \left(\frac{i \tan{\b}}{2s^{\ast}}\right)^{j} \right)\\
& \times (i s^{\ast} \tan{\b})^{p}  \widetilde{J}_{p+ \frac{1}{2}}(t^{\ast})\\
B_{(m-2)/2}^{\alpha,\b,*} (s^{\ast},t^{\ast})&= \sum_{p=0}^{m/2-1} \left(\sum_{j=0}^{\min{(p,m/2-1-p)}} \frac{(m/2-1)!}{(m/2-1-p-j)! j! (p-j)!} \left(\frac{i \tan{\b}}{2s^{\ast}}\right)^{j} \right)\\
& \times (i s^{\ast} \tan{\b})^{p}  \widetilde{J}_{p- \frac{1}{2}}(t^{\ast})\\
C_{(m-2)/2}^{\alpha,\b,*}(s^{\ast},t^{\ast})  &=  \frac{\sin{\b}}{\sin{\alpha}}  \sum_{p=0}^{m/2-1} \left(\sum_{j=0}^{\min{(p,m/2-1-p)}} \frac{(m/2-1)!}{(m/2-1-p-j)! j! (p-j)!} \left(\frac{i \tan{\b}}{2s^{\ast}}\right)^{j} \right)\\
& \times (i s^{\ast} \tan{\b})^{p}  \widetilde{J}_{p+ \frac{1}{2}}(t^{\ast}).
\end{align*}

\section{Fractional Clifford-Fourier system}
\setcounter{equation}{0}
\label{secFurtherprops}

In this section we derive the system of PDEs that is satisfied by the kernel of the fractional CFT. 

\begin{prop} \label{DiffKernel}
For all $m$, the kernel $K_{\alpha,\b}(\ux,\uy)$ satisfies the properties
\begin{align*}
(i \sin{\alpha} \ \partial_{\uy} + \cos{\alpha} \ \uy) K_{\alpha,\b}(\ux,\uy) &= e^{i\b(m-1)} K_{\alpha,-\b}(\ux,\uy) \ \ux,\\
\uy K_{\alpha,\b}(\ux,\uy)&= e^{i\b(m-1)} K_{\alpha,-\b}(\ux,\uy)  (i \sin{\alpha} \ \partial_{\ux} + \cos{\alpha} \ \ux).
\end{align*}
In the last formula, the Dirac operator $\upx$ is acting from the right on $K_{\alpha,-\b}$.
\end{prop}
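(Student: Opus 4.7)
The plan is to verify both identities by a direct calculation starting from the factorization $K_{\alpha,\beta}(\ux,\uy) = f \cdot g$ with scalar Gaussian $f = e^{\frac{i}{2}(\cot\alpha)(|\ux|^2+|\uy|^2)}$ and $g = e^{i\beta\Gamma_{\uy}}(e^{-i\langle\ux,\uy\rangle/\sin\alpha})$. The essential ingredient is the pair of twisted intertwining identities
\[
\upy\,e^{i\beta\Gamma_{\uy}} = e^{i\beta(m-1)}\,e^{-i\beta\Gamma_{\uy}}\,\upy,\qquad
\uy\,e^{i\beta\Gamma_{\uy}} = e^{i\beta(m-1)}\,e^{-i\beta\Gamma_{\uy}}\,\uy,
\]
valid on Clifford-valued functions of $\uy$. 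These are obtained by exponentiating the anticommutators $\{\Gamma_{\uy},\upy\} = (m-1)\upy$ and $\{\Gamma_{\uy},\uy\} = (m-1)\uy$, which in turn follow from the standard Clifford identities $\upy\uy = -m - E + \Gamma_{\uy}$, $\uy\upy = -E - \Gamma_{\uy}$ (with $E = \sum_j y_j\partial_{y_j}$ the Euler operator) together with $[E,\upy] = -\upy$ and $[E,\uy] = \uy$.

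For the first identity, apply $(i\sin\alpha\,\partial_{\uy} + \cos\alpha\,\uy)$ to $K_{\alpha,\beta} = fg$. Since $f$ is scalar, the Leibniz rule produces a term $i\sin\alpha(\partial_{\uy}f)g = -\cos\alpha\,\uy fg$ that cancels the $\cos\alpha\,\uy K$ contribution exactly, leaving only $i\sin\alpha\,f\,\partial_{\uy}g$. The first twisted intertwining lets me pull $\partial_{\uy}$ past $e^{i\beta\Gamma_{\uy}}$, producing the factor $e^{i\beta(m-1)}$ and flipping the sign of $\beta$. The inner derivative $\partial_{\uy}(e^{-i\langle\ux,\uy\rangle/\sin\alpha}) = -\tfrac{i}{\sin\alpha}\,\ux\,e^{-i\langle\ux,\uy\rangle/\sin\alpha}$ places $\ux$ on the right of the plane wave, and since $\ux$ is constant in $\uy$ it further commutes with the surviving $e^{-i\beta\Gamma_{\uy}}$. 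Reassembling the factors yields exactly $e^{i\beta(m-1)}K_{\alpha,-\beta}(\ux,\uy)\,\ux$.

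The second identity is proved in parallel. The second twisted intertwining gives $\uy K_{\alpha,\beta} = e^{i\beta(m-1)}\,f\,e^{-i\beta\Gamma_{\uy}}(\uy h)$ with $h = e^{-i\langle\ux,\uy\rangle/\sin\alpha}$. The plane-wave relation $\uy h = i\sin\alpha\,h\,\overleftarrow{\partial_{\ux}}$ (which follows from $\partial_{x_j}h = -(i y_j/\sin\alpha)h$) converts the left-multiplication by $\uy$ into a right-action of the $\ux$-Dirac operator, which is the convention mentioned after the statement. Since $\Gamma_{\uy}$ commutes with $\partial_{x_j}$ and with right-multiplication by the constant Clifford elements $e_j$, the exponential $e^{-i\beta\Gamma_{\uy}}$ passes through $\overleftarrow{\partial_{\ux}}$ unchanged. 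A final product-rule step absorbs the Gaussian $f$ into the right-differentiation at the cost of a correction $i\cot\alpha\,f(e^{-i\beta\Gamma_{\uy}}h)\,\ux$; combined with the prefactor $i\sin\alpha$ and the identity $\sin\alpha\cdot\cot\alpha = \cos\alpha$, this produces precisely the $\cos\alpha\,\ux$ term and completes the second identity.

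The main obstacle is the clean derivation of the two anticommutators $\{\Gamma_{\uy},\upy\} = (m-1)\upy$ and $\{\Gamma_{\uy},\uy\} = (m-1)\uy$, which demands careful index bookkeeping over the Clifford products $e_je_k$; once they are in place, everything else reduces to controlled applications of the Leibniz rule. A secondary point of care is the right-action convention: $\ux$ and $\uy$ must be kept consistently on the correct side of the Clifford-valued factors so that the final formula reads $K_{\alpha,-\beta}(i\sin\alpha\,\partial_{\ux} + \cos\alpha\,\ux)$ with the stated ordering rather than a reversed one.
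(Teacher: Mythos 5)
Your proof is correct and follows essentially the same route as the paper: both arguments rest on the intertwining relation $\partial_{\uy}\,e^{i\beta\Gamma_{\uy}}=e^{i\beta(m-1)}e^{-i\beta\Gamma_{\uy}}\,\partial_{\uy}$ (equivalently $\partial_{\uy}\Gamma_{\uy}^{k}=(m-1-\Gamma_{\uy})^{k}\partial_{\uy}$) and its analogue for $\uy$, followed by a Leibniz computation in which the Gaussian factor produces the $\cos\alpha$ terms. The only difference is that you additionally derive the anticommutators $\{\Gamma_{\uy},\upy\}=(m-1)\upy$ and $\{\Gamma_{\uy},\uy\}=(m-1)\uy$ from the standard Clifford relations, which the paper simply asserts.
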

\begin{proof}
Taking into account that
\begin{displaymath}
\partial_{\uy} \Gamma_{\uy}^k = (m-1-\Gamma_{\uy})^k \partial_{\uy}
\end{displaymath}
and thus also that
\begin{displaymath}
\partial_{\uy} e^{i\beta \Gamma_{\uy}} = e^{i \beta (m-1-\Gamma_{\uy})} \partial_{\uy},
\end{displaymath}
we subsequently calculate
\begin{align*}
\partial_{\uy} K_{\alpha,\beta}(\ux,\uy) &= \partial_{\uy} e^{\frac{i}{2}( \cot \alpha) (|\ux|^2 + |\uy|^2)}  e^{ i \b \Gamma_{\uy}}   e^{-i \la \ux,\uy \ra / \sin \alpha}\\
&= \partial_{\uy} \left( e^{\frac{i}{2}( \cot \alpha) (|\ux|^2 + |\uy|^2)} \right)  e^{ i \b \Gamma_{\uy}}   e^{-i \la \ux,\uy \ra / \sin \alpha}\\
& + e^{\frac{i}{2}( \cot \alpha) (|\ux|^2 + |\uy|^2)}  e^{i \beta (m-1-\Gamma_{\uy})} \partial_{\uy} e^{-i \la \ux,\uy \ra / \sin \alpha}\\
&= i  \cot \alpha \ \uy \ K_{\alpha,\beta}(\ux,\uy)  + e^{i \beta (m-1)} \ K_{\alpha,-\beta}(\ux,\uy) \left( - \frac{i \ux}{\sin{\alpha}} \right)
\end{align*}
or
\begin{displaymath}
(i \sin{\alpha} \ \partial_{\uy} + \cos{\alpha} \ \uy) K_{\alpha,\beta}(\ux,\uy) = e^{i\beta (m-1)} \ K_{\alpha,-\beta}(\ux,\uy) \ \ux.
\end{displaymath}
The expression for $\uy K_{\alpha,\beta}(\ux,\uy)$ is proven in a similar way using
\begin{displaymath}
\uy \Gamma_{\uy}^k = (m-1-\Gamma_{\uy})^k \uy .
\end{displaymath}
\end{proof}

\begin{remark}
When $m$ is even, we can also obtain Proposition \ref{DiffKernel} using the explicit expression for the kernel as given in Theorem \ref{EvenExplicit}. However, the resulting computation is much more tedious.
\end{remark}

Next, we obtain the following corollary.

\begin{corollary}
For all $m$, the kernel $K_{\alpha,\b}(\ux,\uy)$ satisfies
\begin{align*}
\left( \upy + \uy \right) K_{\alpha,\b}(\ux,\uy) &= - e^{-i \alpha + i \beta(m-1)} K_{\alpha,-\b}(\ux,\uy) \left( \upx - \ux \right) \\
\left( \upy - \uy \right) K_{\alpha,\b}(\ux,\uy) &= - e^{i \alpha + i \beta(m-1)} K_{\alpha,-\b}(\ux,\uy) \left( \upx + \ux \right) 
\end{align*}
and
\[
H_{x} K_{\alpha,\b}(\ux,\uy) = H_{y} K_{\alpha,\b}(\ux,\uy)
\]
with $H_{x} = \Delta_{x} - |\ux|^{2}$.
\end{corollary}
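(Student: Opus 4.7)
The plan is to derive all three identities as direct consequences of Proposition~\ref{DiffKernel} by purely algebraic manipulations, without ever needing to compute Clifford commutators between $\partial_x$ and $\ux$ explicitly.

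For the first two identities, I would start by solving the first equation of Proposition~\ref{DiffKernel} for $\partial_{\uy} K_{\alpha,\b}$, namely
\[
\partial_{\uy}K_{\alpha,\b} = \frac{1}{i\sin\alpha}\Bigl(e^{i\b(m-1)}K_{\alpha,-\b}\,\ux-\cos\alpha\,\uy K_{\alpha,\b}\Bigr),
\]
and then substitute the expression for $\uy K_{\alpha,\b}$ given by the second equation of Proposition~\ref{DiffKernel}. Adding $\uy K_{\alpha,\b}$ to both sides and grouping terms will give $(\partial_{\uy}+\uy)K_{\alpha,\b}$ expressed as $K_{\alpha,-\b}$ times a linear combination of $\ux$ and $\partial_{\ux}$. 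The key simplification is the identity $1-e^{\mp i\alpha}\cos\alpha = \pm i\sin\alpha\, e^{\mp i\alpha}$, which causes the $\sin\alpha$ prefactor to cancel and produces the clean coefficient $-e^{-i\alpha+i\b(m-1)}$ in front of $(\partial_{\ux}-\ux)$. Subtracting $\uy K_{\alpha,\b}$ instead produces the second identity by the same argument with $\alpha \to -\alpha$ in the exponential factor.

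For the scalar equation $H_xK_{\alpha,\b}=H_yK_{\alpha,\b}$, my plan is to compose the two identities just derived. Specifically, I would apply the operator $(\partial_{\uy}-\uy)$ to the first derived identity and $(\partial_{\uy}+\uy)$ to the second, using that these act only in $\uy$ and thus commute with the right action of $\partial_{\ux}\pm\ux$. On the right-hand side this forces us to compute $(\partial_{\uy}\mp\uy)K_{\alpha,-\b}$, which by the same identities (with $\b$ replaced by $-\b$) equals $-e^{\pm i\alpha-i\b(m-1)}K_{\alpha,\b}(\partial_{\ux}\pm\ux)$. The exponentials telescope to $1$, so adding the two resulting equations gives
\[
\bigl[(\partial_{\uy}-\uy)(\partial_{\uy}+\uy)+(\partial_{\uy}+\uy)(\partial_{\uy}-\uy)\bigr]K_{\alpha,\b}=K_{\alpha,\b}\bigl[(\partial_{\ux}+\ux)(\partial_{\ux}-\ux)+(\partial_{\ux}-\ux)(\partial_{\ux}+\ux)\bigr].
\]
Both sides reduce via the universal identity $(A-B)(A+B)+(A+B)(A-B)=2A^{2}-2B^{2}$ to $2(\partial^{2}-\underline{\cdot}^{2})=-2H$, where I use $\partial_{\ux}^{2}=-\Delta_x$ and $\ux^{2}=-|\ux|^{2}$. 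Since $H_x$ is scalar, right action on $K_{\alpha,\b}$ coincides with left action, yielding $H_yK_{\alpha,\b}=H_xK_{\alpha,\b}$.

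The main nuisance I anticipate is bookkeeping, rather than any deep obstacle: tracking the $e^{\pm i\alpha}$ factors that arise when $(\partial_{\uy}\pm\uy)$ is expressed through Proposition~\ref{DiffKernel}, verifying the simplification $1-e^{\mp i\alpha}\cos\alpha=\pm i\sin\alpha\, e^{\mp i\alpha}$, and remembering to flip $\b\mapsto -\b$ in the exponential prefactors each time one of the Proposition~\ref{DiffKernel} identities is applied to $K_{\alpha,-\b}$ rather than $K_{\alpha,\b}$. Provided these signs and phases are handled carefully, no further input beyond Proposition~\ref{DiffKernel} and the commutativity of $\ux$-operators with $\uy$-operators is required.
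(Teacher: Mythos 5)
Your proposal is correct and is essentially the paper's own (very terse) proof: the first two identities are obtained by taking linear combinations of the two equations in Proposition~\ref{DiffKernel} (your simplification $1-e^{\mp i\alpha}\cos\alpha=\pm i\sin\alpha\,e^{\mp i\alpha}$ does produce the stated phases), and the relation $H_xK_{\alpha,\b}=H_yK_{\alpha,\b}$ follows from the anticommutator identity $2H_x=-\{\upx+\ux,\upx-\ux\}$, which is exactly your $(A-B)(A+B)+(A+B)(A-B)=2A^2-2B^2$ step combined with the telescoping of the exponential prefactors and the scalarity of $H_x$. No gaps.
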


\begin{proof}
Immediately, using Proposition \ref{DiffKernel} and the fact that
\[
2 H_{x} = - \left\{ \upx+ \ux, \upx - \ux \right\}.
\]
\end{proof}

Putting
\begin{equation}\label{exprkernel}
K_{\alpha,\beta}(\ux,\uy) = \widehat{K}_{\alpha,\beta}(\ux,\uy) \  e^{\frac{i}{2}( \cot \alpha) (|\ux|^2 + |\uy|^2)},
\end{equation} 
let us determine the system of partial differential equations satisfied by $\widehat{K}_{\alpha,\beta}$.
\begin{prop}
\label{simpleSystem}
 For all $m$, $\widehat{K}_{\alpha,\beta}(\ux,\uy)$ satisfies
\begin{align*}
(i \sin{\alpha} \ \partial_{\uy}) \lbrack \widehat{K}_{\alpha,\b}(\ux,\uy) \rbrack &= e^{i\b(m-1)} \widehat{K}_{\alpha,-\b}(\ux,\uy) \ \ux,\\
\uy \widehat{K}_{\alpha,\b}(\ux,\uy)&= e^{i\b(m-1)}  \lbrack \widehat{K}_{\alpha,-\b}(\ux,\uy) \rbrack  (i \sin{\alpha} \ \partial_{\ux}).
\end{align*}
\end{prop}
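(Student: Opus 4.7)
The plan is to derive the two identities directly from Proposition \ref{DiffKernel} by substituting the factorization (\ref{exprkernel}) and applying the product rule. Write $E = e^{\frac{i}{2}(\cot\alpha)(|\ux|^{2}+|\uy|^{2})}$ for brevity; this is a complex scalar and therefore commutes with every Clifford-valued quantity.

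For the first identity, I would compute the action of the left-acting Dirac operator $\partial_{\uy} = \sum_j e_j \partial_{y_j}$ on the product $\widehat{K}_{\alpha,\beta}\, E$. Using $\partial_{y_j}|\uy|^{2} = 2y_j$ and the fact that $E$ is scalar, the product rule yields
\[
\partial_{\uy}(\widehat{K}_{\alpha,\beta} E) = (\partial_{\uy}\widehat{K}_{\alpha,\beta})\, E + i\cot\alpha\, \uy\, \widehat{K}_{\alpha,\beta}\, E,
\]
where in the last term one uses that $y_j$ is real-valued to move it past the Clifford factors and then recognize $\sum_j y_j e_j = \uy$. Multiplying by $i\sin\alpha$ and adding $\cos\alpha\,\uy K_{\alpha,\beta} = \cos\alpha\, \uy\, \widehat{K}_{\alpha,\beta} E$, the crucial algebraic cancellation is $i\sin\alpha \cdot i\cot\alpha = -\cos\alpha$, which eliminates the correction term; the left-hand side of Proposition \ref{DiffKernel} then becomes simply $i\sin\alpha\,(\partial_{\uy}\widehat{K}_{\alpha,\beta})\, E$. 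Since the right-hand side equals $e^{i\beta(m-1)}\widehat{K}_{\alpha,-\beta}\,\ux\, E$ (again using that $E$ is central), the scalar factor $E$ cancels from both sides, giving the first claimed equation.

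The second identity is handled symmetrically: here $\partial_{\ux}$ acts from the right on $K_{\alpha,-\beta}$, so I would use the right-action rule $f\partial_{\ux} = \sum_j (\partial_{x_j}f)e_j$, applied to the product $\widehat{K}_{\alpha,-\beta}\, E$. The same scalar/Clifford manipulation produces
\[
(\widehat{K}_{\alpha,-\beta} E)\partial_{\ux} = (\widehat{K}_{\alpha,-\beta}\partial_{\ux})\, E + i\cot\alpha\, \widehat{K}_{\alpha,-\beta}\, \ux\, E,
\]
and multiplying by $i\sin\alpha$ produces the correction term $-\cos\alpha\,\widehat{K}_{\alpha,-\beta}\,\ux\, E$, which cancels against $\cos\alpha\, K_{\alpha,-\beta}\,\ux$ in the second equation of Proposition \ref{DiffKernel}. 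Again $E$ cancels, yielding the second equation.

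The only place that requires genuine care is the bookkeeping of noncommutativity when the Dirac operator differentiates the scalar exponential past a Clifford-valued factor; since the exponential is a scalar, every such transfer is legal, which is what makes the calculation work. I expect no further obstacles beyond carefully tracking the left- versus right-action conventions used in Proposition \ref{DiffKernel}.
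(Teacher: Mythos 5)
Your proposal is correct and is exactly the argument the paper intends: the paper's proof of Proposition \ref{simpleSystem} consists of the single sentence ``these equations are obtained by plugging the form (\ref{exprkernel}) into Proposition \ref{DiffKernel},'' and you have simply carried out that substitution, correctly identifying the cancellation $i\sin\alpha\cdot i\cot\alpha=-\cos\alpha$ and the fact that the scalar exponential commutes with all Clifford factors. No gaps; your version just makes the omitted computation explicit.
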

\begin{proof} These equations are obtained by plugging the form (\ref{exprkernel}) into Proposition \ref{DiffKernel}.
\end{proof}

The interested reader may at this point wonder whether $\widehat{K}_{\alpha,\beta}(\ux,\uy)$ as given in Theorem \ref{EvenExplicit} is the only (unique) solution to the system of PDEs given in Proposition \ref{simpleSystem}. It turns out that this is not the case. Indeed, when $\a=\b = \pi/2$ we have investigated this phenomenon in \cite{DBNS}, yielding an entire class of solutions to this type of PDE. A similar analysis can be performed for arbitrary $\a$ and $\b$.

\section{Properties of the fractional CFT}
\setcounter{equation}{0}
\label{PropertiesSection}

From the explicit expression for the kernel of the fractional CFT given in Theorem \ref{EvenExplicit}, we can derive some properties of the kernel.

Let us start by giving a bound for the kernel.
\begin{lem}\label{bound}
Let $m$ be even. For $\ux,\uy \in \mathbb{R}^m$, there exists a constant $c$ such that
\begin{align*}
|A_{(m-2)/2}^{\alpha,\b,*} (s^{\ast},t^{\ast}) + B_{(m-2)/2}^{\alpha,\b,*} (s^{\ast},t^{\ast})| & \leq c \ (1+|\ux|)^{(m-2)/2} \ (1+|\uy|)^{(m-2)/2}\\
|(x_j y_k - x_k y_j) C_{(m-2)/2}^{\alpha,\b,*} (s^{\ast},t^{\ast})| & \leq c \ (1+|\ux|)^{(m-2)/2} \ (1+|\uy|)^{(m-2)/2}, \qquad j \not= k.
\end{align*}
\end{lem}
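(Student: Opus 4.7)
The plan is to substitute the closed-form expressions for $A_{(m-2)/2}^{\alpha,\b,*}$, $B_{(m-2)/2}^{\alpha,\b,*}$ and $C_{(m-2)/2}^{\alpha,\b,*}$ displayed at the end of Section~\ref{ExplicitCF}. Each is a finite linear combination of elementary terms of the form $(s^{*})^{p-j}\widetilde{J}_{p\pm 1/2}(t^{*})$, with coefficients depending only on $m$, $\alpha$, $\b$, $p$ and $j$, so the whole lemma reduces to uniform control of such expressions. Two simple ingredients drive the estimate. First, $\widetilde{J}_{\nu}(t) = t^{-\nu}J_{\nu}(t)$ is uniformly bounded on $[0,\infty)$ for every $\nu\geq -1/2$, as one checks from $J_\nu(t)\sim(t/2)^\nu/\Gamma(\nu+1)$ near $0$ and $J_\nu(t) = O(t^{-1/2})$ at infinity. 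Second, by the very definition of $s^{*}$ and $t^{*}$ one has
\[
|s^{*}|\leq \frac{|\sin\b|}{|\sin\alpha|}|\ux||\uy|, \qquad |x_{j}y_{k}-x_{k}y_{j}|\leq |\ux\wedge\uy| = \frac{|\sin\alpha|}{|\sin\b|}\,t^{*},
\]
at least when $\sin\b\neq 0$.

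For $A_{(m-2)/2}^{\alpha,\b,*}+B_{(m-2)/2}^{\alpha,\b,*}$ I would simply observe that the exponent $p-j$ appearing in both closed forms satisfies $0\leq p-j\leq m/2-1 = (m-2)/2$; combining the first ingredient with the bound on $|s^{*}|$ then immediately yields the desired polynomial growth of order $(m-2)/2$ in each of $\ux$ and $\uy$.

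For $(x_{j}y_{k}-x_{k}y_{j})C_{(m-2)/2}^{\alpha,\b,*}$ the same argument handles every term with $p-j\leq m/2-2$, using merely the coarse estimate $|x_{j}y_{k}-x_{k}y_{j}|\leq |\ux||\uy|$. The only problematic contribution is the top-degree term $p=m/2-1$, $j=0$, for which this naive estimate would produce $(|\ux||\uy|)^{m/2}$, one order too large. The crucial trick is to use instead the sharper inequality $|x_{j}y_{k}-x_{k}y_{j}|\leq (|\sin\alpha|/|\sin\b|)\,t^{*}$, which trades one power of $|\ux||\uy|$ for a power of $t^{*}$; this reduces the problem to the boundedness of $t\,\widetilde{J}_{(m-1)/2}(t)$ on $[0,\infty)$ for $m\geq 2$, verified by the same asymptotic analysis of $J_{\nu}$ at $0$ and $\infty$ used above. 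A brief aside covers the degenerate case $\sin\b=0$: there the prefactors $\tan\b$ and $\sin\b/\sin\alpha$ force $A_{(m-2)/2}^{\alpha,\b,*}$ and $C_{(m-2)/2}^{\alpha,\b,*}$ to vanish, and $B_{(m-2)/2}^{\alpha,\b,*}$ reduces to a harmless multiple of $\widetilde{J}_{-1/2}$.

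The main obstacle is thus the single top-degree contribution of $C_{(m-2)/2}^{\alpha,\b,*}$: without exploiting the hidden factor of $t^{*}$ in $|x_{j}y_{k}-x_{k}y_{j}|$ against the singular behaviour of $\widetilde{J}_{(m-1)/2}$ one loses exactly one order of polynomial growth. Every other term is controlled completely routinely once the uniform bound on the reduced Bessel functions is in hand.
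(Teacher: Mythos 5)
Your argument is correct, and it is essentially the argument the paper itself omits: the paper's ``proof'' is only a pointer to Lemma 5.2 and Theorem 5.3 of \cite{DBXu}, which proceed exactly as you do --- uniform boundedness of $\widetilde{J}_{\nu}$ for $\nu \geq -1/2$, power counting in $s^{\ast}$ over the finite Bessel sums, and absorbing the dangerous top-degree term of $C^{\ast}_{(m-2)/2}$ by trading $|x_j y_k - x_k y_j| \leq |\ux \wedge \uy|$ for a factor of $t^{\ast}$ against $\widetilde{J}_{(m-1)/2}$. Your identification of that single top-degree contribution as the only non-routine point, and your handling of the degenerate case $\sin\b = 0$, are both accurate.
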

\begin{proof}
The proof is similar of the one of Lemma 5.2 and Theorem 5.3 in \cite{DBXu}.
\end{proof}
\noindent Note that the kernel is bounded if $m=2$.

Recall that the kernel $K_{\alpha,\b}(\ux,\uy)$ is a Clifford algebra valued function. It can be decomposed as
\begin{equation}
K_{\alpha,\b}(\ux,\uy) = K_{0}(\ux,\uy) + \sum_{i<j} e_{i} e_{j} K_{ij}(\ux,\uy)
\label{DecompKernel}
\end{equation}
with $ K_{0}(\ux,\uy)$ and $K_{ij}(\ux,\uy)$ scalar functions. Now, using Lemma \ref{bound}, we immediately have the following bounds.

\begin{thm} \label{kernelBound}
Let $m$ be even. For $\ux,\uy \in \RR^m$, one has
\begin{align*}
| K_{0}(\ux,\uy)| &\leq  c (1+|\ux|)^{(m-2)/2}(1+|\uy|)^{(m-2)/2},  \\
| K_{ij}(\ux,\uy)| &\leq c (1+|\ux|)^{(m-2)/2}(1+|\uy|)^{(m-2)/2}, \qquad
 i \neq j. 
\end{align*}
\end{thm}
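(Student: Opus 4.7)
The plan is to read off the scalar and bivector components of $K_{\alpha,\beta}(\ux,\uy)$ directly from the explicit expression in Theorem \ref{EvenExplicit}, and then apply Lemma \ref{bound}. First I would isolate the scalar-valued prefactor
\[
\Phi(\ux,\uy) := \left(\frac{\pi}{2}\right)^{1/2} e^{i\beta(m-2)/2} (\cos\beta)^{(m-2)/2} e^{-i s^{\ast}\cot\beta} e^{\frac{i}{2}(\cot\alpha)(|\ux|^2 + |\uy|^2)},
\]
and observe that, because $\alpha,\beta$ are real (with $\alpha\neq 0,\pm\pi$), the quantities $\cot\alpha$, $\cot\beta$ and $s^{\ast}=\frac{\sin\beta}{\sin\alpha}\langle\ux,\uy\rangle$ are real, so the two oscillatory exponentials have modulus exactly $1$. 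Consequently $|\Phi(\ux,\uy)|\le c'$ uniformly in $(\ux,\uy)$, for a constant $c'=c'(\alpha,\beta)$.

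Next I would match Clifford grades on the two sides of the decomposition (\ref{DecompKernel}). Since $A_{(m-2)/2}^{\alpha,\beta,*}$ and $B_{(m-2)/2}^{\alpha,\beta,*}$ are scalar and $\ux\wedge\uy=\sum_{j<k}e_j e_k(x_jy_k-x_ky_j)$ is purely bivectorial, the scalar part is
\[
K_0(\ux,\uy) = \Phi(\ux,\uy)\bigl(A_{(m-2)/2}^{\alpha,\beta,*}(s^{\ast},t^{\ast})+B_{(m-2)/2}^{\alpha,\beta,*}(s^{\ast},t^{\ast})\bigr),
\]
and for each $i<j$ the coefficient of $e_ie_j$ is
\[
K_{ij}(\ux,\uy) = \Phi(\ux,\uy)\,(x_i y_j - x_j y_i)\,C_{(m-2)/2}^{\alpha,\beta,*}(s^{\ast},t^{\ast}).
\]

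Taking moduli and using $|\Phi|\le c'$ together with the two estimates of Lemma \ref{bound} then gives
\[
|K_0(\ux,\uy)|\le c\,(1+|\ux|)^{(m-2)/2}(1+|\uy|)^{(m-2)/2}
\]
and the analogous estimate for $|K_{ij}|$ when $i<j$; the case $i>j$ follows from the antisymmetry $K_{ji}=-K_{ij}$ after enlarging the constant. The argument is essentially bookkeeping, and the only point requiring a moment's care is the uniform bound on $\Phi$, which reduces to noting that the exponents of the two potentially large factors are purely imaginary. The real work has already been carried out in Lemma \ref{bound}, so I do not expect any substantive obstacle here.
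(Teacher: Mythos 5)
Your proposal is correct and takes essentially the same route as the paper, which simply states that the bounds follow immediately from Lemma \ref{bound} via the decomposition (\ref{DecompKernel}); you have merely made explicit the two points the paper leaves implicit, namely that the oscillatory prefactor in Theorem \ref{EvenExplicit} is uniformly bounded (its exponents being purely imaginary for real $\alpha,\beta$) and that the grade-matching identifies $K_0$ and $K_{ij}$ with the scalar and bivector pieces. No issues.
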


The bound of the kernel function defines the domain of the fractional CFT.
\begin{thm}
Let $m$ be an even integer. The fractional Clifford-Fourier transform is well-defined on $B(\mathbb{R}^m) \otimes \cC l_{0,m}$ with
\begin{displaymath}
B(\mathbb{R}^m) = \left\lbrace f \in L^1(\mathbb{R}^m) \ : \ \int_{\mathbb{R}^m} (1+|\uy|)^{(m-2)/2} \ |f(\uy)| \ dy < \infty \right\rbrace.
\end{displaymath}
\end{thm}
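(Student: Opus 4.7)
The goal is to show that the integral expression
\[
\cF_{\alpha,\b}[f](\uy) = \left(\pi(1-e^{-2i\alpha})\right)^{-m/2}\int_{\mathbb{R}^m} K_{\alpha,\b}(\ux,\uy)\, f(\ux)\, dx
\]
converges absolutely for every $\uy \in \mathbb{R}^m$ whenever $f \in B(\mathbb{R}^m)\otimes \cC l_{0,m}$. The plan is to reduce this to a pointwise estimate on the integrand using the component bounds from Theorem \ref{kernelBound} together with the defining weight of $B(\mathbb{R}^m)$.

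First I would write $f = \sum_A e_A f_A$ with scalar-valued components $f_A$, and decompose the kernel as in \eqref{DecompKernel}:
\[
K_{\alpha,\b}(\ux,\uy) = K_{0}(\ux,\uy) + \sum_{i<j} e_i e_j \, K_{ij}(\ux,\uy).
\]
The integrand $K_{\alpha,\b}(\ux,\uy)f(\ux)$ is then a finite linear combination (over the basis $\{e_B\}$ of $\cC l_{0,m}$) of products of the scalar kernel entries $K_0, K_{ij}$ with the scalar functions $f_A$. Since absolute convergence of the $\cC l_{0,m}$-valued integral is equivalent to absolute convergence of each scalar component, it suffices to show that for each fixed pair of indices, the corresponding scalar integrand is in $L^1_{\ux}(\mathbb{R}^m)$.

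Next I would invoke Theorem \ref{kernelBound}, which gives, for some constant $c>0$ independent of $\ux, \uy$,
\[
|K_0(\ux,\uy)| \le c\, (1+|\ux|)^{(m-2)/2}(1+|\uy|)^{(m-2)/2}, \qquad |K_{ij}(\ux,\uy)| \le c\, (1+|\ux|)^{(m-2)/2}(1+|\uy|)^{(m-2)/2}.
\]
Combined with the pointwise estimate $|f(\ux)| \le \sum_A |f_A(\ux)|$, each scalar component of the integrand is bounded in modulus by
\[
c'\,(1+|\uy|)^{(m-2)/2}\,(1+|\ux|)^{(m-2)/2}\,|f(\ux)|.
\]
For fixed $\uy$ the factor $(1+|\uy|)^{(m-2)/2}$ is a constant, and the remaining $\ux$-integral is finite precisely because the defining condition of $B(\mathbb{R}^m)$ states that $\int_{\mathbb{R}^m}(1+|\ux|)^{(m-2)/2}|f(\ux)|\,dx<\infty$. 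Note that the modulation factor $e^{\frac{i}{2}(\cot\alpha)(|\ux|^2+|\uy|^2)}$ has modulus one, and the normalization constant in front of the integral is a fixed complex number (under the standing assumption $\alpha \notin \{0,\pm\pi\}$), so neither affects absolute integrability.

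There is essentially no technical obstacle here: the hard work has already been done in proving the explicit kernel formula (Theorem \ref{EvenExplicit}) and deriving the polynomial bound (Lemma \ref{bound} and Theorem \ref{kernelBound}). The only minor care needed is in handling the Clifford-algebra bookkeeping, i.e.\ confirming that the componentwise bound for $K_0$ and $K_{ij}$ is enough to control $|K_{\alpha,\b}(\ux,\uy)f(\ux)|$ in each Clifford component — which is immediate since multiplication by an $e_A$ only permutes basis vectors up to signs. This yields the claimed well-definedness on $B(\mathbb{R}^m)\otimes \cC l_{0,m}$.
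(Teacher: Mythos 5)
Your argument is correct and is exactly the paper's proof: the paper simply states that the result follows immediately from Theorem \ref{kernelBound}, and your write-up fills in the same routine details (componentwise kernel bound, the fixed factor $(1+|\uy|)^{(m-2)/2}$, and the defining integrability condition of $B(\mathbb{R}^m)$). No discrepancy to report.
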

\begin{proof}
This follows immediately from Theorem \ref{kernelBound}.
\end{proof}

Subsequently, we derive the calculus properties of the fractional CFT. 

\begin{lem} \label{DiffTransform}
Let $m$ be even and $f \in \cS(\mR^{m}) \otimes \cC l_{0, m}$. Then  
\begin{align*}
\cF_{\alpha, -\b} \left[(\ux\cos{\a}-i \sin{\a} \upx) \, f \right] & = e^{-i \b(m-1)} \uy \cF_{\alpha, \b} [f], \\
\cF_{\alpha, -\b} \left[(\cos{\a} \upx - i \ux \sin{\a}) f \right] & = e^{-i \b(m-1)} \upy \cF_{\alpha, \b}[f].
\end{align*} 
\end{lem}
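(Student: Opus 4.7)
The plan is to obtain both identities by pulling the operators on the right-hand side through the integral defining $\cF_{\alpha,\b}$, using Proposition \ref{DiffKernel} to convert them into operators acting on the variable $\ux$ under the integral, and then integrating by parts.

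For the first identity, I would start from the second relation in Proposition \ref{DiffKernel}, namely $\uy K_{\alpha,\b}(\ux,\uy) = e^{i\b(m-1)} K_{\alpha,-\b}(\ux,\uy)(i\sin\alpha\,\upx + \cos\alpha\,\ux)$, multiply on the right by $f(\ux)$, and integrate. The delicate point is the right action of $\upx$ on $K_{\alpha,-\b}$: writing $K\upx=\sum_j(\partial_{x_j}K)e_j$, a componentwise integration by parts (legitimate since $f\in\cS$, so there are no boundary terms) gives $\int (K_{\alpha,-\b}\upx)\,f\,dx = -\int K_{\alpha,-\b}\,\upx f\,dx$. Putting the pieces back together produces
\[
\uy\,\cF_{\alpha,\b}[f](\uy) = e^{i\b(m-1)}\,\cF_{\alpha,-\b}\bigl[(\cos\alpha\,\ux - i\sin\alpha\,\upx)f\bigr](\uy),
\]
which after multiplication by $e^{-i\b(m-1)}$ is the first identity.

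For the second identity, the first relation in Proposition \ref{DiffKernel} only yields the combination
\[
(i\sin\alpha\,\upy + \cos\alpha\,\uy)\cF_{\alpha,\b}[f] = e^{i\b(m-1)}\cF_{\alpha,-\b}[\ux f],
\]
so $\upy\cF_{\alpha,\b}[f]$ is not immediately isolated. I would then substitute the expression for $\uy\cF_{\alpha,\b}[f]$ just derived, which gives
\[
i\sin\alpha\,\upy\cF_{\alpha,\b}[f] = e^{i\b(m-1)}\cF_{\alpha,-\b}\bigl[\ux f - \cos\alpha(\cos\alpha\,\ux-i\sin\alpha\,\upx)f\bigr].
\]
Using $1-\cos^2\alpha=\sin^2\alpha$ the bracket simplifies to $\sin\alpha(\sin\alpha\,\ux + i\cos\alpha\,\upx)f$, and dividing through by $i\sin\alpha$ (permissible since $\alpha\neq 0,\pm\pi$) produces the claimed formula $e^{-i\b(m-1)}\upy\cF_{\alpha,\b}[f] = \cF_{\alpha,-\b}[(\cos\alpha\,\upx - i\sin\alpha\,\ux)f]$.

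The only real obstacle is bookkeeping: in the Clifford setting $\ux$, $\upx$, and the kernel are all Clifford-valued and do not commute, so one must respect the right action of $\upx$ on $K_{\alpha,-\b}$ explicitly and verify that integration by parts produces the correct sign. The hypotheses that $m$ is even and $f\in\cS(\mR^m)\otimes\cC l_{0,m}$ ensure, via Theorem \ref{kernelBound}, that all integrals (and those of $\ux f$, $\upx f$, which are still Schwartz) converge absolutely, justifying both the interchange of $\upy,\uy$ with the integral and the vanishing of boundary terms.
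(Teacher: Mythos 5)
Your proposal is correct and follows essentially the same route as the paper: the paper's proof likewise invokes Proposition \ref{DiffKernel} together with integration by parts, justified by the polynomial kernel bound of Theorem \ref{kernelBound} for even $m$. Your write-up merely spells out the details (the right action of $\upx$, the sign from integration by parts, and the elimination of the $\cos\alpha\,\uy$ term via $1-\cos^{2}\alpha=\sin^{2}\alpha$) that the paper leaves implicit, and all of these check out.
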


\begin{proof}
Because $m$ is even, the kernel $K_{\a,\b}$ has a polynomial bound according to Theorem \ref{kernelBound} and we can apply integration by parts. Using Proposition \ref{DiffKernel} then yields the desired results. 
\end{proof}

We now arrive at the main theorem of this section.

\begin{thm} \label{CFschwartz} 
Let $m$ be even. Then $\cF_{\alpha, \b}$ is a continuous operator on $\cS(\RR^m) \otimes \cC l_{0,m}$.
\end{thm}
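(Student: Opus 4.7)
The plan is to bound every Schwartz seminorm of $\cF_{\alpha,\beta}[f]$ by a Schwartz seminorm of $f$, using exactly two ingredients already in place: the calculus identities of Lemma~\ref{DiffTransform}, which trade $\uy$-side operations for $\ux$-side ones, and the polynomial kernel bound of Theorem~\ref{kernelBound}. Since $|e^{\frac{i}{2}(\cot\alpha)(|\ux|^2+|\uy|^2)}|=1$, the latter gives, for every $g \in \cS(\mR^m) \otimes \cC l_{0,m}$,
\[
|\cF_{\alpha,\pm\beta}[g](\uy)| \;\le\; c\,(1+|\uy|)^{(m-2)/2}\int_{\mR^m}(1+|\ux|)^{(m-2)/2}|g(\ux)|\,dx,
\]
the integral on the right being dominated by a Schwartz seminorm of $g$. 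Moreover, the anticommutator identities $y_j = -\tfrac12\{\uy,e_j\}$ and $\partial_{y_j} = -\tfrac12\{\upy,e_j\}$ show that individual coordinates and partial derivatives are linearly recoverable from the Clifford operations $\uy$ and $\upy$, so it suffices to bound the family of seminorms $\sup_{\uy}(1+|\uy|)^{M}\,|\upy^{n}\,\cF_{\alpha,\beta}[f](\uy)|$ for all integers $M, n \ge 0$.

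I would first iterate Lemma~\ref{DiffTransform}. Setting $M_1 := \cos\alpha\,\ux - i\sin\alpha\,\upx$ and $M_2 := \cos\alpha\,\upx - i\sin\alpha\,\ux$, each application of $\uy$ (respectively $\upy$) to $\cF_{\alpha,\beta}[f]$ converts it to $e^{i\beta(m-1)} \cF_{\alpha,-\beta}[M_1 f]$ (respectively $e^{i\beta(m-1)} \cF_{\alpha,-\beta}[M_2 f]$). Two applications flip $\beta \to -\beta \to \beta$ and cancel the unit-modulus prefactors, giving by induction
\[
\uy^{2N}\upy^{n}\cF_{\alpha,\beta}[f] \;=\; c_n\, \cF_{\alpha,(-1)^{n}\beta}\bigl[M_1^{2N} M_2^{n} f\bigr],\qquad |c_n|=1.
\]
Using $\uy^2 = -|\uy|^2$ and taking absolute values,
\[
|\uy|^{2N}\,|\upy^{n}\cF_{\alpha,\beta}[f](\uy)| \;=\; |\cF_{\alpha,(-1)^{n}\beta}[M_1^{2N} M_2^{n} f](\uy)|.
\]
Since $M_1, M_2$ are first-order differential operators with polynomial coefficients, any composite $M_1^{2N}M_2^{n}$ maps $\cS(\mR^m) \otimes \cC l_{0,m}$ continuously to itself.

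Combining with the kernel bound yields $|\uy|^{2N}|\upy^{n}\cF_{\alpha,\beta}[f](\uy)| \le c\,(1+|\uy|)^{(m-2)/2}\,p_{N,n}(f)$ for a Schwartz seminorm $p_{N,n}$ of $f$. For given target $M$ and $n$, I would choose $N$ with $2N \ge M + (m-2)/2$: on $\{|\uy|\ge 1\}$ the elementary bound $|\uy| \ge (1+|\uy|)/2$ turns the estimate into $(1+|\uy|)^{M}\,|\upy^{n}\cF_{\alpha,\beta}[f](\uy)| \le C\,p_{N,n}(f)$, while on $\{|\uy|\le 1\}$ the case $N=0$ already yields $(1+|\uy|)^M |\upy^n \cF_{\alpha,\beta}[f](\uy)| \le C'\,p_{0,n}(f)$. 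Taking the sum of the two seminorms completes the bound and establishes continuity on $\cS(\mR^m) \otimes \cC l_{0,m}$.

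The main obstacle, though a mild one, is the Clifford noncommutativity in the iteration step: one must verify that iterating Lemma~\ref{DiffTransform} really produces the clean closed form above with prefactors pairing to $|c_n|=1$, and that $M_1^{2N} M_2^{n}$, whose Clifford-valued polynomial coefficients depend on $\alpha$, acts continuously on $\cS(\mR^m) \otimes \cC l_{0,m}$. Both are handled by routine bookkeeping, the key simplification being the reduction to even powers of $\uy$ via $\uy^2 = -|\uy|^2$, which keeps $\beta$ fixed after every pair of applications.
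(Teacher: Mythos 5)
Your proposal follows essentially the same route as the paper's proof: iterate Lemma~\ref{DiffTransform} so that multiplication by $|\uy|^{2}$ and the action of $\Delta_{\uy}=-\upy^{2}$ on $\cF_{\alpha,\beta}[f]$ are traded for the operators $(\ux\cos\alpha-i\sin\alpha\,\upx)^{2}$ and $(\cos\alpha\,\upx-i\sin\alpha\,\ux)^{2}$ acting on $f$, then apply the polynomial kernel bound of Theorem~\ref{kernelBound} and split into the regions $|\uy|\le 1$ and $|\uy|\ge 1$ with $2N\ge M+(m-2)/2$. Your closed form $\uy^{2N}\upy^{n}\cF_{\alpha,\beta}[f]=c\,\cF_{\alpha,(-1)^{n}\beta}[M_1^{2N}M_2^{n}f]$ with $|c|=1$ is correct (each application flips $\beta$ and contributes a unimodular factor $e^{\pm i\beta(m-1)}$), and working with the full Clifford-valued transform rather than the paper's scalar components $\cF_{0}$, $\cF_{ij}$ is a harmless cosmetic difference.

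The one genuine gap is your reduction of the Schwartz topology to the seminorms $\sup_{\uy}(1+|\uy|)^{M}\,|\upy^{n}\cF_{\alpha,\beta}[f](\uy)|$. The anticommutator identity gives $\partial_{y_j}g=-\tfrac12\bigl(\upy(e_jg)+e_j\upy g\bigr)$, but $e_j$ does not commute with $\upy$ (one has $\upy\, e_j=-2\partial_{y_j}-e_j\upy$ as operators), so iterating it for a mixed derivative such as $\partial_{y_1}\partial_{y_2}g$ produces interleaved operators like $\upy\, e_1\upy\, e_2$ that are not expressible through powers of $\upy$ alone applied to $g$; ``linearly recoverable'' fails beyond first order, so the claimed sufficiency of your seminorm family is not established by the argument you give. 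What rescues the proof is that the even powers $\upy^{2n}=(-\Delta_{\uy})^{n}$ already yield the family $\rho^{*}_{\gamma,n}(g)=\sup_{\uy}|\uy^{\gamma}\Delta^{n}g(\uy)|$, and the equivalence of this family with the standard seminorms $\rho_{\gamma,\delta}$ is a true but nontrivial fact which the paper invokes by citation (proof of Theorem 6.3 in \cite{DBXu}); it rests on an interpolation inequality of Landau--Kolmogorov type, not on Clifford algebra. Replace your anticommutator justification by that equivalence and the rest of your argument goes through as written.
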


\begin{proof}
Using formula (\ref{DecompKernel}) we can rewrite $\cF_{\alpha, \b}$ as 
\[
\cF_{\alpha, \b} = \cF_{0} + \sum_{i<j} e_{i} e_{j} \cF_{ij}
\]
with
\begin{align*}
\cF_{0}[f](\uy) &= \left(\pi (1- e^{-2i \alpha})\right)^{-m/2} \int_{\mR^{m}} K_{0}(\ux,\uy) f(\ux)dx\\
\cF_{ij}[f](\uy) &= \left(\pi (1- e^{-2i \alpha})\right)^{-m/2} \int_{\mR^{m}} K_{ij}(\ux,\uy) f(\ux) dx
\end{align*}
scalar integral transforms.

Now we observe that $(\ux\cos{\a}-i \sin{\a} \upx)^{2}$, resp. $(\cos{\a} \upx - i \ux \sin{\a})^{2}$ are scalar operators. Indeed, we can e.g. compute
\begin{align*}
(\ux\cos{\a}-i \sin{\a} \upx)^{2} & = -(\cos{\a})^{2}|\ux|^{2} + (\sin{\a})^{2} \Delta - i \sin{\a} \cos{\a} \{\ux, \upx\}\\
&=-(\cos{\a})^{2}|\ux|^{2} + (\sin{\a})^{2} \Delta + i \sin{\a} \cos{\a}(2 \mE+m),
\end{align*}
with $\mE = \sum_{j=1}^{m} x_j \partial_{x_j}$ the Euler operator. Subsequently applying Lemma \ref{DiffTransform} two times, we obtain
\begin{align}
\label{propF0}
\begin{split}
\cF_{0}[(\ux\cos{\a}-i \sin{\a} \upx)^{2} f](\uy) &= -|\uy|^{2}\cF_{0}[f](\uy)\\
\cF_{0}[(\cos{\a} \upx - i \ux \sin{\a})^{2}f](\uy) &= -\Delta_{y} \cF_{0}[f](\uy).
\end{split}
\end{align}
The same results hold for $\cF_{ij}$.

It clearly suffices to prove that $\cF_{0}$ and $\cF_{ij}$ are continuous maps on $\cS(\RR^m)$. We give the proof for $\cF_{0}$, the other cases being similar.

Recall that the Schwartz class $\cS(\RR^m)$ is endowed with the topology defined by the family of semi-norms 
$$
    \rho_{\g,\d} (f) : = \sup_{x\in \RR^m} |\ux^\g \partial^\d f(\ux)|, \qquad \g ,\d \in \NN_0^m,  
$$
and $f \in \cS(\RR^m)$ if $\rho_{\g,\d}(f) < \infty$ for all $\g,\d$. An equivalent characterization is given by $\rho_{\g,n}^* (f) < \infty$ for 
$$
   \rho_{\g,n}^* (f) : =  \sup_{x\in \RR^m} |\ux^\g \Delta^n f(\ux)|, \qquad \g \in \NN_0^m, \quad n \in \NN_0,
$$
see \cite{DBXu}, proof of Theorem 6.3.

Now let $\g \in \NN_0^m$ and $n \in \NN_0$. If $|\uy| \le 1$, then by (\ref{propF0}) and Theorem \ref{kernelBound}, 
\begin{align*}
   |\uy^\g \Delta_y^n \left(\cF_{0} f \right)(\uy)|  & =  |\uy^\g |  \cdot | \cF_{0}[ (\cos{\a} \upx - i \ux \sin{\a})^{2n}f](\uy)| \\ 
   &\le c (1+|\uy|)^{(m-2)/2} \int_{\RR^m}  (1+|\ux|)^{(m-2)/2} | (\cos{\a} \upx - i \ux \sin{\a})^{2n} f(\ux)| dx \\
   & \le c_{1} \sup_{x\in \RR^m}|(1+|\ux|)^{3m/2}(\cos{\a} \upx - i \ux \sin{\a})^{2n} f(\ux)|
\end{align*}
as $f$ is a Schwartz class function. For $|\uy| \ge 1$ we find similarly
\begin{align*}
   |\uy^\g \Delta_y^n\left( \cF_{0} f\right)(\uy)|  & =  |\uy^\g |  \cdot |\cF_{0}[ (\cos{\a} \upx - i \ux \sin{\a})^{2n} f](\uy)| \\ 
   &= |\uy^\g|\cdot |\uy|^{-2 \s} |\cF_{0}[(\ux\cos{\a}-i \sin{\a} \upx)^{2\s} (\cos{\a} \upx - i \ux \sin{\a})^{2n} f](\uy) | \\
   & \le c |\uy|^{|\g| - 2\s}  (1+|\uy|)^{(m-2)/2} \,\\
   & \quad \times  \int_{\RR^m}  (1+|\ux|)^{(m-2)/2} \left|(\ux\cos{\a}-i \sin{\a} \upx)^{2\s} (\cos{\a} \upx - i \ux \sin{\a})^{2n} f(\ux) \right|dx  \\
   &  \le c_{2} \sup_{x\in \RR^m}|(1+|\ux|)^{3m/2} (\ux\cos{\a}-i \sin{\a} \upx)^{2\s} (\cos{\a} \upx - i \ux \sin{\a})^{2n} f(\ux)|
\end{align*}
if $2 \s \ge |\g| + (m-2)/2$. 

As now both terms
\begin{align*}
& \sup_{x\in \RR^m}|(1+|\ux|)^{3m/2}(\cos{\a} \upx - i \ux \sin{\a})^{2n} f(\ux)|\\
&\sup_{x\in \RR^m}|(1+|\ux|)^{3m/2} (\ux\cos{\a}-i \sin{\a} \upx)^{2\s} (\cos{\a} \upx - i \ux \sin{\a})^{2n} f(\ux)|
\end{align*}
can be expanded so that they are bounded from above by a finite sum of seminorms $ \rho_{\g,\d} (f)$, it is easy to obtain an expression of the form
\begin{align*}
&\sup_{y\in \RR^m}|\uy^\g \Delta_y^n \left( \cF_{0} f\right)(\uy)|\\ 
&\le \max \left\{c_{1} \sup_{x\in \RR^m}|(1+|\ux|)^{3m/2}(\cos{\a} \upx - i \ux \sin{\a})^{2n} f(\ux)|, \right.\\
&\quad \left.   c_{2} \sup_{x\in \RR^m}|(1+|\ux|)^{3m/2} (\ux\cos{\a}-i \sin{\a} \upx)^{2\s} (\cos{\a} \upx - i \ux \sin{\a})^{2n} f(\ux)|
\right\}.\\
&\leq \sum_{\mbox{finite}} \rho_{\g,\d} (f).
\end{align*}
This proves the continuity of $\cF_{0}$. Similar considerations give the continuity of $\cF_{ij}$, thus completing the proof of the theorem.
\end{proof}

\section{Eigenvalues of fractional CFT}
\setcounter{equation}{0}
\label{EigenvaluesSection}

In this section we will calculate the action of the fractional CFT on the basis $\lbrace \psi_{j,k,\ell} \rbrace$ defined by (\ref{basis}). This will allow us to prove the inversion theorem on Schwartz space for even dimension (see Theorem \ref{InvTheoremS}). We also discuss what happens for exceptional values of the fractional parameters $\alpha$ and $\beta$.
\subsection{Operator exponential approach}
The fractional CFT can be written as the operator exponential
\begin{displaymath}
\cF_{ \alpha, \b} = e^{i(-\alpha {\mathcal H}+\b \Gamma)}
\end{displaymath}
with ${\mathcal H} = \frac{1}{2} (- \Delta + |\ux|^2-m)$.
Combining (see \cite{AIEP}, p. 114)
\begin{displaymath}
{\mathcal H} \lbrack \psi_{j,k,\ell} \rbrack = (j+k) \ \psi_{j,k,\ell}
\end{displaymath}
with 
\begin{displaymath}
\Gamma  \lbrack \psi_{2j,k,\ell} \rbrack = -k  \psi_{2j,k,\ell} \quad ; \quad  \Gamma  \lbrack \psi_{2j+1,k,\ell} \rbrack = (k+m-1)  \psi_{2j+1,k,\ell}, \end{displaymath}
we obtain
\begin{align*}
(-\alpha {\mathcal H}+\b \Gamma) \lbrack \psi_{2j,k,\ell} \rbrack   &= \left( - \alpha (2j+k)-\b k \right) \ \psi_{2j,k,\ell}\\
(-\alpha {\mathcal H}+\b \Gamma) \lbrack \psi_{2j+1,k,\ell} \rbrack  &= \left( - \alpha (2j+1+k)+\b (k+m-1) \right) \ \psi_{2j+1,k,\ell}.
\end{align*} 
Hence, we find consecutively
\begin{align*}
e^{i(-\alpha {\mathcal H}+\b \Gamma)} \lbrack \psi_{2j,k,\ell} \rbrack &= \sum_{n=0}^{\infty} \frac{i^n}{n!}  (-\alpha {\mathcal H}+\b \Gamma)^n \lbrack \psi_{2j,k,\ell} \rbrack\\
&= \sum_{n=0}^{\infty} \frac{i^n}{n!} \left( - \alpha (2j+k)-\b k \right)^n \ \psi_{2j,k,\ell}\\
&= e^{-i\alpha (2j+k)} \ e^{-i\b k} \ \psi_{2j,k,\ell}
\end{align*}
and similarly
\begin{displaymath}
e^{i(-\alpha {\mathcal H}+\b \Gamma)} \lbrack \psi_{2j+1,k,\ell} \rbrack = e^{-i\alpha (2j+1+k)} \ e^{i\b (k+m-1)} \ \psi_{2j+1,k,\ell}.
\end{displaymath}
\subsection{Series approach}
In this subsection we consider a general kernel of the following form
\begin{equation}\label{structure kernel}
K(\ux,\uy) = \left( A(w, \widetilde{z}) + (\ux \wedge \uy) \  B(w, \widetilde{z}) \right) \ e^{\frac{i}{2}( \cot \alpha) (|\ux|^2 + |\uy|^2)} 
\end{equation}
with
\begin{align*}
A(w,\widetilde{z}) &= \sum_{k=0}^{+\infty} \alpha_{k} \ (\widetilde{z})^{-\l}J_{k+\l}(\widetilde{z}) C^{\l}_{k}(w)\\
B(w,\widetilde{z}) &=  \sum_{k=1}^{+\infty} \beta_{k} \ (\widetilde{z})^{-\l-1} J_{k+\l}(\widetilde{z}) C^{\l+1}_{k-1}(w)
\end{align*}
and $\alpha_{k}, \beta_{k} \in \mC$, $\widetilde{z} = (|\ux||\uy|)/ \sin{\alpha}$, $w=\langle \underline{\xi},\underline{\eta} \rangle$ ($\ux = |\ux| \underline{\xi}$, $\uy = |\uy| \underline{\eta}$, $\underline{\xi},\underline{\eta} \in S^{m-1}$), $\lambda=(m-2)/2$.

We define the integral transform
\begin{align*}
\cF\lbrack f \rbrack (\uy) &= \frac{1}{(\pi(1-e^{-2i\alpha}))^{m/2}} \int_{\mR^{m}} K(\ux,\uy) \ f(\ux) \ dx.
\end{align*}
Now we calculate the action of this transform on the basis (\ref{basis}) of $\cS(\mR^{m}) \otimes \cC l_{0,m}$. We start with the following auxiliary result expressing the radial behavior of the integral transform.

\begin{proposition}
\label{radbehavior1}
Let $M_{k} \in \cM_{k}$ be a spherical monogenic of degree $k$. Let $f(\ux)= f_0(|\ux|)$ be a real-valued radial function in 
$\cS(\RR^m)$. Further, put $\underline{\xi}= \ux/|\ux|$, $\underline{\eta} = \uy/|\uy|$ and $r = |\ux|$. Then one has
\begin{eqnarray*} 
\cF \left\lbrack f(r)M_{k}(\ux) \right\rbrack (\uy) &=& c_m \left( \frac{\l}{\l+k} \alpha_{k} - \sin{\alpha} \ \frac{k}{2(k+ \l)} \beta_k \right)  e^{\frac{i}{2} (\cot \alpha) |\uy|^2} M_{k}(\underline{\eta})\\
&&\times \int_{0}^{+\infty} r^{m+k-1}f_0(r)  \ (\widetilde{z})^{-\l} J_{k + \l}(\widetilde{z})  \ e^{\frac{i}{2} (\cot \alpha) r^2} dr
\end{eqnarray*}
and
\begin{eqnarray*}
\cF \left\lbrack f(r) \ux M_{k}(\ux) \right\rbrack (\uy) &=& c_m \left( \frac{\l}{\l+k+1} \alpha_{k+1} + \sin{\alpha} \frac{k+1+2\l}{2(k+1+ \l)} \beta_{k+1} \right) e^{\frac{i}{2} (\cot \alpha) |\uy|^2}  \\
&&\times \underline{\eta} \  M_{k}(\underline{\eta}) \ \int_{0}^{+\infty} r^{m+k}f_0(r)   \ (\widetilde{z})^{-\l} J_{k +1+ \l}(\widetilde{z}) \ e^{\frac{i}{2} (\cot \alpha) r^2} dr
\end{eqnarray*}
with $\widetilde{z}= \frac{r |\uy|}{\sin{\alpha}}$, $\l = (m-2)/2$ and 
\[
c_m =  \frac{2}{\Gamma \left( \frac{m}{2} \right) \ (1-e^{-2i\alpha})^{m/2}}.
\]
\label{RadialBeh}
\end{proposition}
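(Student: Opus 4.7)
The plan is to reduce the proposition to two canonical angular integrals on $S^{m-1}$ via polar coordinates, then evaluate those integrals using the (Clifford-analytic) Funk--Hecke identities.

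First I substitute the series expansions of $A(w,\widetilde{z})$ and $B(w,\widetilde{z})$ into the definition of $\cF[f(r)M_k(\ux)](\uy)$, set $\ux = r\underline{\xi}$ with $r=|\ux|$, and use homogeneity $M_k(\ux)=r^{k}M_k(\underline{\xi})$. Since $J_{j+\lambda}(\widetilde{z})$, $(\widetilde{z})^{-\lambda}$ and $e^{\frac{i}{2}(\cot\alpha)r^2}$ depend on $\ux$ only through $r$, the integral splits, term by term in $j$, into a radial integral against $r^{m+k-1}f_0(r)(\widetilde{z})^{-\lambda}J_{j+\lambda}(\widetilde{z})e^{\frac{i}{2}(\cot\alpha)r^2}$ and an angular integral of one of the two shapes
\begin{align*}
I_j^{(1)} &= \int_{S^{m-1}} C_j^{\lambda}(\langle\underline{\xi},\underline{\eta}\rangle)\,M_k(\underline{\xi})\,dS(\underline{\xi}),\\
I_j^{(2)} &= \int_{S^{m-1}} (\underline{\xi}\wedge\underline{\eta})\,C_{j-1}^{\lambda+1}(\langle\underline{\xi},\underline{\eta}\rangle)\,M_k(\underline{\xi})\,dS(\underline{\xi}).
\end{align*}

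I evaluate these separately. Applying the scalar Funk--Hecke theorem component-wise to $M_k$ gives $I_j^{(1)}=0$ for $j\neq k$ and a constant multiple of $M_k(\underline{\eta})$ for $j=k$, with the factor $\lambda/(\lambda+k)$ coming from the standard Gegenbauer reproducing constant. For $I_j^{(2)}$ I rewrite $\underline{\xi}\wedge\underline{\eta}=\underline{\xi}\,\underline{\eta}+\langle\underline{\xi},\underline{\eta}\rangle$ and apply the Clifford-analytic Funk--Hecke identities used in Lemma~3.1 of \cite{DBXu} to project onto the monogenic part; once again only the $j=k$ term survives, producing a multiple of $M_k(\underline{\eta})$ with coefficient $k/(2(k+\lambda))$. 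Collecting the two surviving contributions and rewriting $(\widetilde{z})^{-\lambda-1}=(\widetilde{z})^{-\lambda}\sin\alpha/(r|\uy|)$ to extract the $\sin\alpha$ in front of $\beta_k$ gives the first formula; the overall constant $c_m=2/(\Gamma(m/2)(1-e^{-2i\alpha})^{m/2})$ arises from pairing the normalization $(\pi(1-e^{-2i\alpha}))^{-m/2}$ with $|S^{m-1}|=2\pi^{m/2}/\Gamma(m/2)$.

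For the second formula I repeat the same reduction with the density $r^{k+1}f_0(r)\underline{\xi}M_k(\underline{\xi})$. Using the Fischer decomposition $\cP_{k+1}=\cM_{k+1}\oplus\underline{\xi}\,\cM_k$, the analogues of $I_j^{(1)}$ and $I_j^{(2)}$ with $M_k$ replaced by $\underline{\xi}M_k$ pair nontrivially only with the $j=k+1$ Gegenbauer mode, producing a scalar multiple of $\underline{\eta}M_k(\underline{\eta})$ and yielding the Funk--Hecke constants $\lambda/(\lambda+k+1)$ on $\alpha_{k+1}$ and $(k+1+2\lambda)/(2(k+1+\lambda))$ on $\beta_{k+1}$. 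The extra $\sin\alpha$ appears again from the same rewriting of $(\widetilde{z})^{-\lambda-1}$, and the sign on the $\beta_{k+1}$ term flips because the wedge-product integral now acts on $\underline{\xi}M_k$ rather than $M_k$.

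The main obstacle is the precise evaluation of the Clifford-valued angular integrals $I_j^{(2)}$ and its $\underline{\xi}M_k$-analogue: the action of the bivector $\underline{\xi}\wedge\underline{\eta}$ on spherical monogenics cannot be read off from the scalar Funk--Hecke theorem, and one must keep track of all sign and constant factors produced by the Clifford Funk--Hecke identities in order to recover exactly the coefficients stated. Once these lemmas are in place the remainder of the argument is routine bookkeeping.
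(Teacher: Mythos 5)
Your proposal is correct and follows essentially the same route as the paper, which simply defers to the proof of Theorem 6.4 in \cite{DBXu} --- precisely the polar-coordinate splitting into radial integrals and Gegenbauer/Funk--Hecke angular integrals (scalar for the $A$-part, Clifford-valued for the $\ux\wedge\uy$-part) that you describe. One small imprecision: the decomposition you invoke for the second formula is $\cH_{k+1}=\cM_{k+1}\oplus\ux\,\cM_{k}$ rather than the Fischer decomposition of $\cP_{k+1}$; the fact actually needed is that $\ux M_{k}$ is harmonic of degree $k+1$, so that only the $j=k+1$ Gegenbauer mode survives.
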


\begin{proof}
The proof goes along similar lines as the proof of Theorem 6.4 in \cite{DBXu}.
\end{proof}

We then have the following theorem.

\begin{theorem}
\label{eigenvalues}
One has, putting $\beta_{0}=0$,
\begin{align*}
\begin{split}
\cF \lbrack \psi_{2j,k,\ell}\rbrack (\uy) & = \frac{2^{-\l}}{\Gamma(\l+1)} \ \left(\frac{\l}{\l+k} \alpha_{k} - \sin{\alpha} \ \frac{k}{2(\l+k)} \beta_{k} \right) i^k e^{-i \alpha (k+2j)} \ \psi_{2j,k,\ell}(\uy)\\
\cF \lbrack \psi_{2j+1,k,\ell}\rbrack (\uy) & = \frac{2^{-\l}}{\Gamma(\l+1)} \ \left(\frac{\l}{\l+k+1} \alpha_{k+1} + \sin{\alpha} \ \frac{k+1+2\l}{2(\l+k+1)} \beta_{k+1} \right) i^{k+1} \ e^{-i\alpha(k+2j+1)} \ \psi_{2j+1,k,\ell}(\uy).
\end{split}
\end{align*}
\end{theorem}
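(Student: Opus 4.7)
The plan is to apply Proposition~\ref{radbehavior1} directly to reduce $\cF[\psi_{j,k,\ell}]$ to a single Laguerre--Bessel--Gaussian radial integral, and then to evaluate that integral via a classical Laplace-transform formula for Laguerre polynomials against Bessel functions.

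First, I would observe that the two families of basis functions fit exactly the two forms of Proposition~\ref{radbehavior1}: $\psi_{2j,k,\ell}(\ux) = f_0(|\ux|)\,M_k^{(\ell)}(\ux)$ with $f_0(r) = L_j^{\lambda+k}(r^2)e^{-r^2/2}$, and $\psi_{2j+1,k,\ell}(\ux) = f_0(|\ux|)\,\ux\,M_k^{(\ell)}(\ux)$ with $f_0(r) = L_j^{\lambda+k+1}(r^2)e^{-r^2/2}$. Applying the proposition immediately produces the announced $(\alpha_k,\beta_k)$- or $(\alpha_{k+1},\beta_{k+1})$-combination in front, the spherical factor $M_k^{(\ell)}(\underline{\eta})$ (resp.\ $\underline{\eta}\,M_k^{(\ell)}(\underline{\eta})$), an outer Gaussian $e^{\frac{i}{2}(\cot\alpha)|\uy|^2}$, and the normalization $c_m = 2\,e^{im\alpha/2}/[\Gamma(m/2)(2i\sin\alpha)^{m/2}]$, leaving only the radial integral
\[
I = \int_0^\infty r^{m+k-1} L_j^{\nu}(r^2)\,\widetilde{z}^{-\lambda} J_{\nu}(\widetilde{z})\,e^{-\frac{r^2}{2}(1-i\cot\alpha)}\,dr, \qquad \widetilde{z} = \frac{r|\uy|}{\sin\alpha},
\]
with $\nu = \lambda+k$ (even case) or $\nu = \lambda+k+1$ (odd case).

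Second, I would substitute $t=r^2$ and invoke the classical identity (derivable in a few lines from the Laguerre generating function combined with the standard Laplace transform $\int_0^\infty t^{\nu/2}J_\nu(2\sqrt{yt})e^{-at}\,dt = y^{\nu/2}e^{-y/a}/a^{\nu+1}$):
\[
\int_0^\infty t^{\nu/2}L_j^\nu(t)J_\nu\bigl(2\sqrt{yt}\bigr)e^{-st}\,dt = \frac{(s-1)^j\,y^{\nu/2}}{s^{j+\nu+1}}\,e^{-y/s}\,L_j^\nu\!\left(\frac{y}{s(1-s)}\right),
\]
with $s=(1-i\cot\alpha)/2 = -ie^{i\alpha}/(2\sin\alpha)$ and $y=|\uy|^2/(4\sin^2\alpha)$ forced by matching the Laplace parameter and the Bessel argument. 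Two arithmetic miracles occur: $s(1-s)=1/(4\sin^2\alpha)$ makes the argument of the Laguerre polynomial collapse exactly to $|\uy|^2$; and $y/s = |\uy|^2(1+i\cot\alpha)/2$, so $e^{-y/s} = e^{-|\uy|^2/2}e^{-i(\cot\alpha)|\uy|^2/2}$, and the imaginary phase cancels the outer Gaussian, leaving just $e^{-|\uy|^2/2}$. Together with $|\uy|^k M_k^{(\ell)}(\underline{\eta}) = M_k^{(\ell)}(\uy)$, this reconstitutes $\psi_{2j,k,\ell}(\uy)$ exactly (and $\psi_{2j+1,k,\ell}(\uy)$ analogously via $\underline{\eta}\,|\uy|^{k+1}M_k^{(\ell)}(\underline{\eta}) = \uy\,M_k^{(\ell)}(\uy)$).

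Third, I would collect all remaining constants. Using $s-1 = -ie^{-i\alpha}/(2\sin\alpha)$, one checks that $(s-1)^j/s^{j+\nu+1} = i^{\nu+1}(2\sin\alpha)^{\nu+1}e^{-i(2j+\nu+1)\alpha}$; multiplying this by $c_m$ and by the geometric factors produced when rewriting $I$ in the variables $t$ and $y$, the powers of $\sin\alpha$, of $2$, and of $i$ all cancel (thanks to $\nu+1 = m/2+k$ and $\lambda+1 = m/2$), while the phase reduces to $e^{-i\alpha(2j+k)}$. The net eigenvalue coefficient is $\tfrac{2^{-\lambda}}{\Gamma(\lambda+1)}\,i^k\,e^{-i\alpha(2j+k)}$ times the $(\alpha_k,\beta_k)$-combination, exactly matching the theorem; the odd case follows with the shift $k\mapsto k+1$ in the Laguerre and Bessel indices and the extra $\underline{\eta}$-factor. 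The main obstacle is simply the bookkeeping of phases, factors of $\sin\alpha$, $i$, $2$ and $\Gamma$-functions arising across three places (the normalization $c_m$, the Laplace identity, and the conversion between $\widetilde{z}$ and the standard Bessel argument $2\sqrt{yt}$); once the two arithmetic identities above are verified, which encode the eigenfunction property, the entire theorem is a matter of algebraic simplification.
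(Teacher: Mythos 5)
Your proposal is correct and follows essentially the same route as the paper: apply Proposition~\ref{radbehavior1} and then evaluate the resulting radial integral via the Laguerre--Bessel--Gaussian identity (your Laplace-transform formula is exactly Gradshteyn--Ryzhik 7.421.4 after the substitution $t=x^2$, which is the identity the paper cites). Your parameter matching ($s(1-s)=1/(4\sin^2\alpha)$, the cancellation of the phase $e^{\frac{i}{2}(\cot\alpha)|\uy|^2}$, and the collection of powers of $i$, $2$ and $\sin\alpha$) checks out and simply fills in the bookkeeping the paper leaves implicit.
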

\begin{proof}
This follows from the explicit expression (\ref{basis}) of the basis and the identity (see \cite[p. 847, formula 7.421, number 4 with $\alpha=1$]{Grad}):
\[
\int_{0}^{+\infty} x^{\nu+1} e^{-\b x^2} L_{n}^{\nu}(x^{2}) J_{\nu}(xy) dx = 2^{-\nu-1} \b^{-\nu-n-1} (\b-1)^n y^{\nu} e^{-\frac{y^2}{4 \b}} L_n^{\nu} \left\lbrack \frac{y^2}{4 \b (1-\b)} \right\rbrack. 
\]
\end{proof}

The fractional Clifford-Fourier kernel $K_{\alpha,\b}$ has the same structure as the kernel in (\ref{structure kernel}), because of Theorem \ref{FracSeries}. The action of the fractional CFT $\cF_{\alpha,\b}$ on the basis $\lbrace \psi_{j,k,\ell} \rbrace$ can hence be determined by substituting the corresponding coefficients $\alpha_k$ and $\beta_k$ in Theorem \ref{eigenvalues}.
This yields the following result.
\begin{theorem}
\label{InvTheoremS}
For the basis $\{\psi_{j,k,\ell}\}$ of  $\cS(\mR^{m}) \otimes \cC l_{0,m}$ , one has
\begin{align*}
\cF_{\alpha,\b}\lbrack \psi_{2j,k,\ell} \rbrack &= e^{-i\alpha(2j+k)} e^{-i\b k} \ \psi_{2j,k,\ell},\\
\cF_{\alpha,\b} \lbrack \psi_{2j+1,k,\ell} \rbrack &= e^{-i\alpha(2j+1+k)} e^{i\b (k+m-1)} \ \psi_{2j+1,k,\ell}.
\end{align*}
In particular, the action of $\CF_{\alpha,\b}$ coincides with the operator $e^{i(-\alpha {\mathcal H}+\b \Gamma)}$ when restricted to the basis $\{ \psi_{j,k,\ell}\}$ and
\begin{equation}\label{inversion}
\cF_{\alpha,\b} \cF_{-\alpha,-\b}= Id 
\end{equation}
on the basis $\{\psi_{j,k,\ell}\}$. Moreover, when $m$ is even, \eqref{inversion} holds for all $f \in \cS(\RR^m) \otimes \cC l_{0,m}$.
\end{theorem}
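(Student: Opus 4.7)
The plan is to deduce the action on the basis directly from Theorem \ref{eigenvalues}, by identifying the correct coefficients $\alpha_k$ and $\beta_k$ corresponding to the fractional Clifford-Fourier kernel of Theorem \ref{FracSeries}. First, I would observe that the kernel $K_{\alpha,\b}$ has exactly the structure (\ref{structure kernel}), with the ``$A$''-part being $A_\lambda^{\alpha,\b} + B_\lambda^{\alpha,\b}$ and the ``$B$''-part being $C_\lambda^{\alpha,\b}$. Collecting terms and using $\Gamma(\lambda+1) = \lambda \Gamma(\lambda)$ gives
\[
\alpha_k = 2^{\lambda-1}\Gamma(\lambda)\, i^{-k}\bigl[k\, e^{i\b(k+2\lambda)} + (k+2\lambda)\, e^{-i\b k}\bigr], \qquad \beta_k = \frac{2^{\lambda}\Gamma(\lambda+1)}{\sin\alpha}\, i^{-k}\bigl(e^{i\b(k+2\lambda)} - e^{-i\b k}\bigr).
\]

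Next, I would substitute these into the two formulas of Theorem \ref{eigenvalues} and simplify. The key cancellation is that the factor $\sin\alpha$ multiplying $\beta_k$ kills the $1/\sin\alpha$, and the combination $\frac{\lambda}{\lambda+k}\alpha_k - \sin\alpha\,\frac{k}{2(\lambda+k)}\beta_k$ collapses to $2^{\lambda}\Gamma(\lambda+1)\, i^{-k}\, e^{-i\b k}$ after the $e^{i\b(k+2\lambda)}$ contributions cancel. Multiplying by $\frac{2^{-\lambda}}{\Gamma(\lambda+1)}\, i^k\, e^{-i\alpha(k+2j)}$ yields precisely $e^{-i\alpha(2j+k)}e^{-i\b k}$. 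The analogous computation for the odd index uses $\alpha_{k+1}$ and $\beta_{k+1}$; this time the $e^{-i\b(k+1)}$ parts cancel and, using $k+1+2\lambda = k+m-1$, the eigenvalue reduces to $e^{-i\alpha(2j+1+k)}e^{i\b(k+m-1)}$. These match exactly the values computed via the operator exponential approach in the preceding subsection, so the identification with $e^{i(-\alpha\mathcal{H} + \b\Gamma)}$ on the basis is automatic.

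The inversion identity $\cF_{\alpha,\b}\cF_{-\alpha,-\b} = \mathrm{Id}$ on the basis is then just multiplication of eigenvalues: $e^{-i\alpha(2j+k)}e^{-i\b k} \cdot e^{i\alpha(2j+k)}e^{i\b k} = 1$, and similarly for the odd-index basis elements. To pass from the basis to arbitrary $f \in \cS(\RR^m) \otimes \cC l_{0,m}$ when $m$ is even, I would invoke Theorem \ref{CFschwartz}: both $\cF_{\alpha,\b}$ and $\cF_{-\alpha,-\b}$ are continuous operators on the Schwartz space, hence so is their composition, and it agrees with the identity on the basis $\{\psi_{j,k,\ell}\}$ of $\cS(\RR^m) \otimes \cC l_{0,m}$.

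I do not foresee a serious obstacle: the eigenvalue computation is purely algebraic simplification, and the extension to Schwartz functions is a direct application of the continuity theorem combined with the basis property (established in \cite{MR926831}). The only point requiring mild care is the density/basis argument in the last step — one must note that finite linear combinations of the $\psi_{j,k,\ell}$ are dense in $\cS(\RR^m) \otimes \cC l_{0,m}$ so that the continuous operator $\cF_{\alpha,\b}\cF_{-\alpha,-\b} - \mathrm{Id}$, being zero on a dense subset, vanishes identically. Restricting the even-dimension hypothesis to just the last statement ensures compatibility with the validity range of Theorem \ref{CFschwartz}.
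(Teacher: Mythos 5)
Your proposal is correct and follows essentially the same route as the paper: the eigenvalues are obtained by reading off the coefficients $\alpha_k$, $\beta_k$ from Theorem \ref{FracSeries} and substituting them into Theorem \ref{eigenvalues} (your explicit simplification, which the paper omits, checks out), and the extension of \eqref{inversion} to all of $\cS(\RR^m)\otimes \cC l_{0,m}$ for even $m$ is exactly the paper's argument via Theorem \ref{CFschwartz} and density of the basis.
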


\begin{proof}
We only need to prove the last statement, for $m$ being even. This follows from Theorem \ref{CFschwartz} and the fact that 
$\{\psi_{j,k,\ell}\}$ is a dense subset of  $\cS(\mR^{m}) \otimes \cC l_{0,m}$. 
\end{proof}

\subsection{Exceptional parameters and the fractional CFT}
\label{excep}

Using Theorem \ref{InvTheoremS} we are able to explain what happens for the exceptional parameter values $\alpha =0$ and $\alpha = \pm \pi$.
First we write the fractional CFT as the composition of two operators:
\[
\cF_{\alpha,\b} =\cF_{0,\b}  \cF_{\alpha,0}
\]
with $\cF_{0,\b} =   e^{i\b \Gamma}$ and $\cF_{\alpha,0} =   e^{ \frac{i \alpha m}{2}} e^{ \frac{i \alpha}{2}(\Delta - |\ux|^{2})}$.

When $\alpha =0$, we observe that $\cF_{\alpha,0}$ becomes the identity operator. Using the eigenvalues computed in Theorem \ref{InvTheoremS}, we moreover find that 
\[
\cF_{\pm \pi,0} [f](\uy) = f(-\uy).
\]
Note that in both cases, the resulting operators can no longer be written as integral operators with kernel given by Theorem \ref{EvenExplicit}.

We still need to consider the operator $\cF_{0,\b}=e^{i\b \Gamma}$ for these cases. This is a purely angular operator, which can be written as a singular integral operator acting on the sphere. Its detailed study will be presented elsewhere.


\end{document}